\author{Ted Dobson}
\address{Ted Dobson, University of Primorska, UP IAM, Muzejski trg 2, SI-6000 Koper, Slovenia and\\ University of Primorska, UP FAMNIT, Glagolja\v{s}ka 8, SI-6000 Koper, Slovenia}
\email{ted.dobson@upr.si}
\author{Ademir Hujdurovi\' c}
\address{Ademir Hujdurovi\' c, University of Primorska, UP IAM, Muzejski trg 2, SI-6000 Koper, Slovenia and\\ University of Primorska, UP FAMNIT, Glagolja\v{s}ka 8, SI-6000 Koper, Slovenia}
\email{ademir.hujdurovic@upr.si}
\author{Klavdija Kutnar}
\address{Klavdija Kutnar, University of Primorska, UP IAM, Muzejski trg 2, SI-6000 Koper, Slovenia and\\ University of Primorska, UP FAMNIT, Glagolja\v{s}ka 8, SI-6000 Koper, Slovenia}
\email{klavdija.kutnar@upr.si}
\author{Joy Morris}
\address{Joy Morris, Department of Mathematics and Computer Science, University of Lethbridge,
Lethbridge,\\ Alberta, T1K 3M4, Canada}
\email{joy.morris@uleth.ca}\thanks{This work is supported in part by the Natural Science and Engineering Research Council of Canada (grant RGPIN-2017-04905) and in part by the Slovenian Research Agency (research programs P1-0285 and P1-0404 and research projects J1-1691, J1-1694, J1-1695, J1-1715, J1-9108, J1-9110, J1-9186,  N1-0062 and N1-0102).}
\newtheorem{thrm}{Theorem}[section]
\newtheorem{prop}[thrm]{Proposition}
\newtheorem{defin}[thrm]{Definition}
\newtheorem{lem}[thrm]{Lemma}
\newtheorem{hey}[thrm]{Remark}
\def\Z{{\mathbb Z}}
\def\Aut{{\rm Aut}}
\def\la{{\langle}}
\def\ra{{\rangle}}
\def\cal{\mathcal}
\def\gcd{{\rm gcd}}
\def\soc{{\rm soc}}
\def\fix{{\rm fix}}
\def\C{{\rm Z}}
\def\Stab{{\rm Stab}}
\def\tl{{\triangleleft}}
\def\mod{{\rm mod\ }}
\def\AGL{{\rm AGL}}
\def\PSL{{\rm PSL}}
\def\PGammaL{{\rm P\Gamma L}}
\def\SL{{\rm SL}}
\def\F{{\mathbb F}}
\def\PGL{{\rm PGL}}
\def\GL{{\rm GL}}
\def\PSp{{\rm PSp}}
\def\PG{{\rm PG}}
\def\GammaL{{\rm \Gamma L}}
\def\SigmaL{{\rm \Sigma L}}
\def\PSigmaL{{\rm P\Sigma L}}
\def\Cay{{\rm Cay}}
\begin{document}
\pagestyle{plain}

\title{Classification of vertex-transitive digraphs via automorphism group}

\bigskip

\maketitle

\begin{abstract}
In the mid-1990s, two groups of authors independently obtained classifications of vertex-transitive graphs whose order is a product of two distinct primes.  In the intervening years it has become clear that there is additional information concerning these graphs that would be useful, as well as making explicit the extensions of these results to digraphs.  Additionally, there are several small errors in some of the papers that were involved in this classification.  The purpose of this paper is to fill in the missing information as well as correct all known errors.
\end{abstract}

\section{Introduction}

The initial motivation for this paper came from some work \cite{DobsonHKM2017} done by the first four  authors that used a well-known classification of vertex-transitive graphs of order $pq$, where $p$ and $q$ are distinct primes.

The original classification had been obtained by two different groups of authors, each with their own perspective on what properties of these graphs were important.
One group (consisting of Maru\v si\v c and Scapellato) \cite{MarusicS1994} was primarily concerned with determining a minimal transitive subgroup of the automorphism group, while the other (consisting of Praeger, Xu, and several others) \cite{PraegerWX1993,PraegerX1993} was primarily concerned with determining the full automorphism groups of these graphs, and in particular determining all primitive or symmetric permutation groups that can act as the automorphism group of such a graph.  Although the results in the classifications are stated for graphs, the proofs as written apply equally to digraphs.

Over the years, it has become apparent that there are ``gaps" in the information about vertex-transitive digraphs of order $pq$ that are not addressed by either approach but would be useful to fill.  Specifically, the classification of vertex-transitive digraphs of order $pq$ that have imprimitive almost simple automorphism groups was incomplete, and the full automorphism group of the Maru\v si\v c-Scapellato (di)graphs was unknown.  Additionally, there are several  errors  in this classification, and these have propagated themselves in the literature.   The most significant of these errors, at least from the point of view of the difficulty in correcting the error, is with Praeger, Wang, and Xu's classification of symmetric Maru\v si\v c-Scapellato graphs \cite{PraegerWX1993}.

It is the purpose of this paper to fill the ``gaps" described in the preceding paragraph, and to correct the known errors. Finally, widespread reliance on results that contained errors has left a body of results that may or may not be correct; at best, the proofs need to be revised. We have not attempted to address all of these, but we provide a list of those that we are aware of.

\section{Preliminaries}

Throughout, $p$ and $q$ are distinct primes with $q<p$.  We begin with basic definitions. In particular, we define the classes of graphs and digraphs that will appear in what follows (with the exception of the Maru\v si\v c-Scapellato digraphs, whose definition is best presented in a group-theoretic context and is therefore postponed to Definition \ref{MSdigraphs}).  We denote the arc-set of a digraph $\Gamma$ by $A(\Gamma)$.  The most commonly studied class of vertex-transitive digraphs are  Cayley digraphs.

\begin{defin}
Let $G$ be a group and $S\subseteq G$. Define the {\bf \mathversion{bold} Cayley digraph of
$G$ with connection set $S$}, denoted $\Cay(G,S)$, to be the digraph with $V(\Cay(G,S)) = G$ and $A(\Cay(G,S)) = \{(g,gs):g\in G, s\in S\}$.
\end{defin}

Note that we use the term digraphs to include graphs. If $\Gamma$ is a digraph satisfying $(x,y)\in A(\Gamma)$ if and only if $(y,x)\in A(\Gamma)$, then we will say that $\Gamma$ is a {\bf graph}, and replace each pair $(x,y)$ and $(y,x)$ of symmetric ordered pairs in $A(\Gamma)$ by the unordered pair $\{x,y\}$ in the \textbf{edge set} $E(\Gamma)$, which takes the place of the arc set.  The next-most-commonly-encountered class of vertex-transitive digraphs are metacirculant digraphs, first defined by Alspach and Parsons \cite{AlspachP1982} (although they only defined metacirculant graphs).

\begin{defin}\label{metacirculantdefin}
Let $V = \Z_m\times\Z_n$, $\alpha\in\Z_n^*$, and $S_0,\ldots,S_{m-1}\subseteq\Z_n$ such that $\alpha^mS_i = S_i$, $i\in\Z_n$.  Define an {\bf \mathversion{bold}$(m,n,\alpha,S_0,\ldots,S_{m-1})$-metacirculant digraph} $\Gamma = \Gamma(m,n,\alpha,S_0,\ldots,S_{m-1})$ by $V(\Gamma) = \Z_m\times\Z_n$ and $A(\Gamma) = \{(\ell,j),(\ell + i,k)):k-j\in \alpha^\ell S_i\}$.
We also define an {\bf \mathversion{bold} $(m,n)$-metacirculant digraph} to be a digraph that is an $(m,n,\alpha,S_0,\ldots,S_{m-1})$-metacirculant digraph for some $\alpha$ and some $S_0, \ldots, S_{m-1}$ as above.
\end{defin}

Many Cayley digraphs and metacirculant digraphs have the important property of imprimitivity that assists in any effort to understand their automorphisms.

\begin{defin}\label{imprimitive}
Let $G\le S_X$ be transitive.  A subset $B\subseteq X$ is a {\bf block} of $G$ if whenever $g\in G$, then $g(B)\cap B = \emptyset$ or $B$.  For a block $B$ of $G$, the set $\mathcal{B}=\{g(B)\mid g \in G\}$ is called a {\bf $G$-invariant partition}.  If $B = \{x\}$ for some $x\in X$ or $B = X$, then $B$ is a {\bf trivial block}.  Any other block is nontrivial.  If $G$ has a nontrivial block, then $G$ is {\bf imprimitive}. If $G$ is not imprimitive, we say $G$ is {\bf primitive}.

If $\Gamma$ is a digraph, then we say that $\Gamma$ {\bf admits an imprimitive action} if there is some transitive group $G \le \Aut(\Gamma)$ that is imprimitive.  We say that $\Gamma$ {\bf admits no imprimitive action} if every transitive group $G \le \Aut(\Gamma)$ is primitive. We say that $\Gamma$ is
 {\bf primitive} if $\Aut(\Gamma)$ is primitive, and $\Gamma$ is {\bf imprimitive} if $\Aut(\Gamma)$ is imprimitive. We refer to any block of $\Aut(\Gamma)$ as a block of $\Gamma$ also.
\end{defin}

It is important for us to make these definitions about $\Gamma$. One of the sources of confusion in the literature is that Maru\v si\v c and Scapellato referred to a digraph as $m$-imprimitive whenever it admits an imprimitive action with blocks of size $m$, even if the full automorphism group is primitive.

We observe that a digraph $\Gamma$ of order $pq$ must lie in one of three families: $\Gamma$ is primitive; $\Gamma$ is imprimitive with blocks of size $p$; or $\Gamma$ is imprimitive with blocks of size $q$. Note that the second and third families are not mutually exclusive.

Maru\v si\v c provided some of the early analysis of vertex-transitive graphs of order $pq$.

\begin{prop}[Proposition 3.3, \cite{Marusic1988}] \label{prop3.3Dragan}
The graphs of order $pq$ that admit an imprimitive action with blocks of size $p$ are precisely the $(q,p)$-metacirculant graphs.
\end{prop}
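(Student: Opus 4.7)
The claim is a biconditional, so I would prove the two directions separately.

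For the easy direction, suppose $\Gamma = \Gamma(q,p,\alpha,S_0,\ldots,S_{q-1})$ is a $(q,p)$-metacirculant. The maps $\rho\colon (i,j)\mapsto(i+1,\alpha j)$ and $\tau\colon(i,j)\mapsto(i,j+1)$ preserve the arc formula by direct check, so $\la\rho,\tau\ra\le\Aut(\Gamma)$. This group acts transitively on $V(\Gamma)=\Z_q\times\Z_p$ while preserving the block system of $q$ fibers $\{i\}\times\Z_p$, each of size $p$, providing the desired imprimitive action.

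For the forward direction, let $G\le\Aut(\Gamma)$ be transitive imprimitive with block system $\mathcal{B}$ of $q$ blocks of size $p$, and let $K$ be the kernel of the induced $G$-action on $\mathcal{B}$. The plan is to produce $\tau,\rho\in\Aut(\Gamma)$ playing the roles of the canonical metacirculant generators. Since $p>q$, $p\nmid |G/K|$, so every Sylow $p$-subgroup $P$ of $G$ lies in $K$; a Sylow-conjugacy argument in $K$ (together with the observation that any nontrivial $p$-group action on a set of prime size $p$ is regular) shows $P$ acts transitively on every block of $\mathcal{B}$. Hence $P$ embeds as an $\F_p$-subspace of $(\Z_p)^q$ that surjects onto each coordinate, and the hypothesis $q<p$ yields, by a union-bound argument on the $q$ coordinate hyperplanes, an element $\tau\in P$ with all coordinates nonzero. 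This $\tau$ is $p$-semiregular and its cycles coincide with the blocks of $\mathcal{B}$.

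Next, since $G/K$ is transitive of prime degree $q$ its Sylow $q$-subgroup has order $q$, and a Frattini-type argument lets me choose $\rho\in N_G(P)$ of $q$-power order acting as a $q$-cycle on $\mathcal{B}$. The conjugation action of $\rho$ on $P\cong\F_p^d$ is then an $\F_p$-linear automorphism of $q$-power order which, in suitable coordinates, is a (possibly twisted) cyclic shift on $\F_p^q$ restricted to $P$. The main obstacle is to choose $\tau$ to be an eigenvector of this action with eigenvalue $\alpha\in\Z_p^*$, which would give $\rho\tau\rho^{-1}=\tau^\alpha$; since $\gcd(p,|\rho|)=1$, $P$ decomposes as a semisimple $\F_p[\la\rho\ra]$-module, and a short case analysis (averaging a fully-supported element of $P$ over the $\la\rho\ra$-orbit handles the eigenvalue-$1$ case, while the semisimple decomposition covers the rest) produces such a $\tau$. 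Finally, the labeling $\phi(i,j)=\tau^j\rho^i(v_0)$ for a suitably chosen base point $v_0$, combined with connection sets $S_i=\{k\in\Z_p:((0,0),(i,k))\in A(\Gamma)\}$, realizes $\Gamma$ as the claimed $(q,p,\alpha,S_0,\ldots,S_{q-1})$-metacirculant.
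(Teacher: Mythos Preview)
The paper does not give its own proof of this proposition; it is quoted from \cite{Marusic1988}, with only the remark that the argument there extends to digraphs. So there is no ``paper's proof'' to compare against directly. That said, your attempt has a genuine gap.

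Your argument is fine up through the construction of a $p$-semiregular element $\tau\in P$: the Sylow $p$-subgroup $P$ of $G$ lies in the kernel $K$, is elementary abelian inside $\Z_p^q$, surjects onto each coordinate, and the union-bound over the $q$ coordinate kernels (using $q<p$) produces a fully supported $\tau$. The Frattini argument giving $\rho\in N_G(P)$ of $q$-power order cycling the blocks is also fine.

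The gap is your claim that one can choose $\tau\in P$ to be an eigenvector for the conjugation action of such a $\rho$. The phrase ``averaging \ldots\ handles the eigenvalue-$1$ case, while the semisimple decomposition covers the rest'' does not do the job: the average can vanish, and when $q\nmid p-1$ the action of $\rho$ on $P$ may have no $\F_p$-eigenvectors at all. Concretely, take $q=3$, $p=5$, let $P=\{(a,b,c)\in\F_5^{\,3}:a+b+c=0\}$ act on $\Z_3\times\Z_5$ via $(i,j)\mapsto(i,j+[a,b,c]_i)$, and let $\rho_0(i,j)=(i+1,j)$. Then $G=P\rtimes\la\rho_0\ra$ is transitive and imprimitive with blocks of size $5$, $P$ is its Sylow $5$-subgroup, and $N_G(P)=G$. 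Any block-cycling element of $G$ conjugates $P$ by the cyclic shift $(a,b,c)\mapsto(c,a,b)$; since $3\nmid 5-1$, the only possible eigenvalue in $\F_5$ is $1$, whose eigenspace is the diagonal $\{(a,a,a)\}$, which meets $P$ only in $0$. So no $\rho\in G$ cycling the blocks normalizes any semiregular $\la\tau\ra\le P$. Your proof, which works entirely inside the given $G$, cannot conclude from here.

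The proposition is nonetheless true for this $G$: every $G$-invariant digraph on $15$ points turns out to have between-block orbital digraphs that are complete bipartite, forcing a wreath-type structure with $\Aut(\Gamma)\gneq G$; the required $\rho,\tau$ exist in $\Aut(\Gamma)$ but not in $G$. The fix is therefore to stop insisting that both $\rho$ and $\tau$ lie in the particular transitive subgroup $G$ you started with, and either pass to $\Aut(\Gamma)$ or argue via the graph structure rather than via an eigenvector in $P$.
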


\begin{thrm}[Theorem 3.4, \cite{Marusic1988}]\label{thm3.4Dragan}
Let $\Gamma$ be a graph of order $pq$ that admits an imprimitive  action of the group $G$ with a $G$-invariant partition ${\cal B}$. Suppose that $\Gamma$ is not a metacirculant graph. Then the kernel of the action of $G$ on $\mathcal B$ is trivial, and $G$ is nonsolvable.
\end{thrm}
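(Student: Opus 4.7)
My plan is to prove the contrapositive: assuming $\Gamma$ is not metacirculant, deduce that both $K=1$ and $G$ is nonsolvable. First, by Proposition \ref{prop3.3Dragan}, if the blocks of $\mathcal{B}$ had size $p$ then $\Gamma$ would be a $(q,p)$-metacirculant, contradicting the hypothesis; so the blocks of $\mathcal{B}$ have size $q$ and $|\mathcal{B}|=p$.

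Next I would show $K=1$. Assume for contradiction that $K\neq 1$. Because $K\triangleleft G$ and $G$ is transitive on $V$, all $K$-orbits are equal in size, and each lies in a block of $\mathcal{B}$, so the common orbit size is $1$ or $q$; nontriviality forces size $q$, so the $K$-orbits coincide with $\mathcal{B}$. Since $K$ embeds in $\prod_{B\in\mathcal{B}}\Sym(B)\cong (S_q)^p$ and $q<p$ gives $p\nmid q!$, we have $p\nmid|K|$. Let $Q$ be a Sylow $q$-subgroup of $K$; an order count shows $Q\not\le\Stab_K(v)$ for any $v\in V$, so $Q$ is transitive on every block. In the $q$-group $Q$ the point stabilizer $Q_v$ has index $q$ and is therefore normal; hence $Q_v$ fixes every vertex of the block containing $v$, and, using transitivity of $Q$ on $\mathcal{B}$, $Q_v$ fixes every vertex of $V$, forcing $Q_v=1$ and $|Q|=q$. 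Writing $Q=\langle\sigma\rangle$, $\sigma$ is semi-regular of order $q$ on $V$. By the Frattini argument $G=K\cdot N_G(Q)$, so $p\mid|N_G(Q)|$; choose $\rho\in N_G(Q)$ of order $p$. Since $p\nmid|K|$, $\rho\notin K$, so its image in the degree-$p$ transitive group $G/K$ is a nontrivial element of order $p$, hence a $p$-cycle on $\mathcal{B}$. Each $\rho$-orbit on $V$ must therefore meet every block, so every $\rho$-orbit has size exactly $p$ and $\rho$ is semi-regular on $V$. Because $\rho$ normalizes $Q$, $\rho\sigma\rho^{-1}=\sigma^a$ for some $a\in\Z_q^*$, and $\langle\rho,\sigma\rangle$ is a metacyclic group of order $pq$ acting regularly on $V$. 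Comparing with Definition \ref{metacirculantdefin} (via $(\ell,j)\leftrightarrow\rho^\ell\sigma^j v_0$) identifies $\Gamma$ with a $(p,q,a,S_0,\ldots,S_{p-1})$-metacirculant, contradicting the hypothesis. Hence $K=1$.

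Finally, with $K=1$ we have $G$ embedded faithfully in $\Sym(\mathcal{B})=S_p$ as a transitive subgroup of prime degree. If $G$ were solvable, Burnside's theorem on solvable transitive groups of prime degree gives $G\le\AGL(1,p)$, so $G$ has a normal cyclic subgroup $P$ of order $p$. Normality of $P$ together with transitivity of $G$ on $V$ makes all $P$-orbits on $V$ of the same size, and faithfulness of $P$ on $V$ forces this size to be $p$. Thus $P$ is semi-regular on $V$ with $q$ orbits, and these orbits form a $G$-invariant partition of $V$ into blocks of size $p$; Proposition \ref{prop3.3Dragan} then says $\Gamma$ is a $(q,p)$-metacirculant, the desired contradiction. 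Hence $G$ is nonsolvable.

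The main obstacle is the middle step: extracting a regular metacyclic subgroup of $\Aut(\Gamma)$ from the mere existence of a nontrivial $K$. The technical heart is showing that the Sylow $q$-subgroup of $K$ has order exactly $q$ and acts semi-regularly, which hinges on combining normality of point stabilizers inside the $q$-group $Q$ with the block-transitivity of $Q$ to force the pointwise stabilizer of one block to coincide with that of every block. Once this semi-regular $\sigma$ is in hand, pulling a normalizing element $\rho$ of order $p$ out of $N_G(Q)$ via the Frattini argument is routine, and the identification with Definition \ref{metacirculantdefin} is mechanical.
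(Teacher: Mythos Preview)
The paper does not supply its own proof of this theorem; it is quoted from \cite{Marusic1988} with only the remark that Maru\v si\v c's argument extends verbatim to digraphs. So there is no in-paper proof to compare against, and I assess your argument on its own.

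Your first and last steps are fine: reducing to blocks of size $q$ via Proposition~\ref{prop3.3Dragan}, and (once $K=1$) using Burnside on the faithful prime-degree action to force a normal Sylow $p$-subgroup and hence size-$p$ blocks, are both correct.

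The gap is in the middle step, where you claim that the Sylow $q$-subgroup $Q$ of $K$ has order exactly $q$. You argue that $Q_v\triangleleft Q$ fixes the block $B\ni v$ pointwise, and then invoke ``transitivity of $Q$ on $\mathcal{B}$'' to conclude that $Q_v$ fixes every vertex of $V$. But $Q\le K=\fix_G(\mathcal{B})$ fixes each block setwise, so $Q$ is certainly \emph{not} transitive on $\mathcal{B}$. Even read charitably as ``transitivity of $Q$ on each block $B'\in\mathcal{B}$'', the inference fails: normality of $Q_v$ in $Q$ together with transitivity of $Q$ on $B'$ only tells you that all $Q_v$-orbits on $B'$ have the same size, $1$ or $q$, and nothing rules out size $q$. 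In fact the conclusion $|Q|=q$ is false in general: for $G=S_q\wr S_p$ in its natural imprimitive action on $pq$ points one has $K=(S_q)^p$ and $Q\cong(\Z_q)^p$, of order $q^p$. (Of course $\Gamma$ is then metacirculant, so the theorem is not contradicted---but your lemma is.)

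Everything you do after ``$|Q|=q$'' (writing $Q=\langle\sigma\rangle$, getting $\rho\sigma\rho^{-1}=\sigma^a$, and reading off a regular metacyclic subgroup) relies on this, so the argument collapses at that point. A repair must either produce a genuine semiregular element of order $q$ normalised by $\rho$ (for instance, any nontrivial element of $C_Q(\rho)$ is automatically semiregular since $\rho$ conjugates its action on one block to its action on every other; the problem is showing $C_Q(\rho)\neq1$), or else bypass $\sigma$ entirely by exhibiting a transitive subgroup of $\Aut(\Gamma)$ in which $\langle\rho\rangle$ is normal, so that the $\rho$-orbits become blocks of size $p$ and Proposition~\ref{prop3.3Dragan} applies directly.
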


The proofs of both of these results as written apply equally to digraphs.

In later work with Scapellato, he extended these results to show the following.

\begin{thrm}[Theorem, \cite{MarusicS1992}]\label{thm:MS1992}
Let $\Gamma$ be a vertex-transitive digraph of order $pq$ that admits an imprimitive action but is not metacirculant. Then every (transitive) imprimitive subgroup of $\Aut(\Gamma)$ admits blocks of size $q$; $p=2^{2^a}+1$ is a Fermat prime, $q$ divides $p-2$, and $\Gamma$ is a Maru\v si\v c-Scapellato graph (see Definition~\ref{MSdigraphs}).
\end{thrm}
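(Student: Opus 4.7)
The plan is to pin down the block size using Proposition~\ref{prop3.3Dragan}, then identify the possibilities for the transitive group action on the blocks using Theorem~\ref{thm3.4Dragan} together with the classification of $2$-transitive groups of prime degree, and finally use the compatibility with vertex-transitivity on $\Gamma$ to extract the arithmetic conditions on $p$ and $q$ and to recognize $\Gamma$ as a Maru\v si\v c-Scapellato digraph. Let $G\le\Aut(\Gamma)$ be any transitive imprimitive subgroup. Since $|V(\Gamma)|=pq$, the nontrivial blocks of $G$ have size $p$ or $q$; blocks of size $p$ would, by Proposition~\ref{prop3.3Dragan} (which applies equally to digraphs), realize $\Gamma$ as a $(q,p)$-metacirculant, contradicting the hypothesis. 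So every transitive imprimitive subgroup of $\Aut(\Gamma)$ has a $G$-invariant partition $\mathcal{B}$ with $|\mathcal{B}|=p$ and each block of size $q$, proving the first assertion. Fixing such a $G$ and $\mathcal{B}$, Theorem~\ref{thm3.4Dragan} tells us that the kernel of the action of $G$ on $\mathcal{B}$ is trivial and that $G$ is nonsolvable; hence $G$ embeds in $\Sym(\mathcal{B})\cong S_p$ as a nonsolvable transitive group of prime degree $p$.

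Burnside's classical theorem then gives that $G$ is $2$-transitive on $\mathcal{B}$, and the CFSG-based classification of $2$-transitive groups of prime degree leaves three families: (a) $A_p\le G\le S_p$; (b) $\PSL_d(r)\le G\le\PGammaL_d(r)$ acting on the points of $\PG(d-1,r)$ with $p=(r^d-1)/(r-1)$ and $d\ge 2$; or (c) $G\in\{\PSL_2(11),M_{11},M_{23}\}$ in its natural $2$-transitive action. The bulk of the proof is then a case analysis using the additional input that $G$ is vertex-transitive on $V(\Gamma)$, so the block stabilizer $G_B$ is transitive on the $q$ vertices of $B$ and hence contains a subgroup of index $q$. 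Family (c) is eliminated by direct inspection of the small subgroup lattices of $\PSL_2(11)$, $M_{11}$, and $M_{23}$, none of which yields a vertex-transitive extension of the required degree compatible with the non-metacirculant hypothesis. Family (a) is eliminated using the classical bound that $A_n$ has no proper subgroup of index less than $n$ (for $n\ge 5$); applied to $G_B\in\{A_{p-1},S_{p-1}\}$ this leaves only a short list of small exceptional subcases (essentially $q=2$ with $G=S_p$), which are verified directly to produce metacirculant digraphs, contradicting hypothesis.

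In family (b), the maximal-parabolic structure of the point stabilizer in $\PSL_d(r)$ similarly obstructs the existence of an index-$q$ subgroup in $G_B$ whenever $d\ge 3$, so $d=2$ and $p=r+1$. Since $p$ is an odd prime, $r=p-1$ must be a power of $2$, forcing $p=2^{2^a}+1$ to be a Fermat prime. The block stabilizer $G_B$ is then a Borel-type subgroup inside $\PGammaL_2(p-1)$, whose Levi factor is cyclic of order $p-2$ (up to diagonal and field contributions); transitivity of $G_B$ on $q$ vertices combined with $q$ prime and $q\ne p$ then forces $q\mid p-2$. With $G$ and its action on $V(\Gamma)=G/G_v$ now completely identified, the $G$-invariant arc set of $\Gamma$ is determined by a $G_v$-invariant subset of $V(\Gamma)\setminus\{v\}$, and this is exactly the data parametrizing the construction of Definition~\ref{MSdigraphs}, so $\Gamma$ is a Maru\v si\v c-Scapellato digraph. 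The main obstacle is this case analysis --- in particular the Borel/composition-factor computation inside $\PGammaL_2(p-1)$ that pins down $q\mid p-2$, together with the small-case verifications in families (a) and (c) --- while the remaining steps are relatively direct applications of Proposition~\ref{prop3.3Dragan}, Theorem~\ref{thm3.4Dragan}, Burnside's theorem, and the classification of $2$-transitive groups of prime degree.
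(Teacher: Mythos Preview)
The paper does not give its own proof of this theorem; it is quoted from \cite{MarusicS1992}. However, the proof of Theorem~\ref{gendigraphauto} walks through the Maru\v si\v c-Scapellato argument, so a comparison is possible. Your overall architecture---Proposition~\ref{prop3.3Dragan} to force blocks of size $q$, Theorem~\ref{thm3.4Dragan} for the trivial kernel, Burnside for $2$-transitivity on $\mathcal{B}$, then the CFSG list of $2$-transitive groups of prime degree---matches theirs.

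The gap is in your case elimination. Your assertion for family~(b) with $d\ge 3$, that the maximal-parabolic structure ``obstructs the existence of an index-$q$ subgroup in $G_B$'', is false: for $\PSL(3,2)$ on $p=7$ points the point stabilizer has order $24$ and contains subgroups of index $3$, so the action on $21$ points with $q=3$ genuinely occurs (it is case~(\ref{case:3-2}) of Theorem~\ref{gendigraphauto}). Similarly, in family~(c), the point stabilizer $A_5$ of $\PSL(2,11)$ on $11$ points has $A_4$ as an index-$5$ subgroup, and the action on $55$ points with $q=5$ occurs (case~(\ref{case:2-11})). In both situations the digraphs are metacirculant not because the group action is obstructed, but because the simple group contains a regular metacyclic subgroup of order $pq$. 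The original proof dispatches most cases with \cite[Proposition~2.2]{MarusicS1992}, which forces $\Gamma$ or its complement to be disconnected (hence a wreath product, hence metacirculant), and handles the two exceptions just named by exhibiting regular metacyclic subgroups directly. An index obstruction alone cannot replace this step: when the action exists you still owe an argument that the resulting digraph is metacirculant, and that argument is the substantive part of \cite{MarusicS1992}.
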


The classification of groups of automorphisms as ``primitive" or ``imprimitive" is a natural one. Observe that a primitive group $G$ cannot contain an intransitive normal subgroup, because the orbits of such a group would give rise to a $G$-inviariant partition \cite[Proposition 7.1]{Wielandt1964}. However, $G$-invariant partitions can also arise even if $G$ has no intransitive normal subgroup.
The following definition was first introduced by Praeger.

\begin{defin}
A transitive group is called {\bf quasiprimitive} if every nontrivial normal subgroup is transitive.
\end{defin}

As we have just observed, every primitive group is quasiprimitive, and quasiprimitive groups are a generalization of primitive groups.

Vertex-transitive digraphs with quasimprimitive automorphism groups are usually studied via their orbital digraphs, which we now define.

\begin{defin}
Let $G$ act on $X\times X$ in the canonical way, that is $g(x,y) = (g(x),g(y))$. The orbits of this action are called {\bf orbitals}.  One orbital is the diagonal, or $\{(x,x):x\in X\}$, and is called the {\bf trivial orbital}. We assume here that ${\cal O}_1,\ldots{\cal O}_r$ are the nontrivial orbitals.  Define digraphs $\Gamma_1,\ldots,\Gamma_r$ by $V(\Gamma_i) = X$ and $E(\Gamma_i) = {\cal O}_i$.  The set $\{\Gamma_i:1 \le i \le r\}$ is the set of {\bf \mathversion{bold} orbital digraphs of $G$}.  A {\bf \mathversion{bold} generalized orbital digraph of $G$} is an arc-disjoint union of some orbital digraphs of $G$ (that is, identify vertices in the natural way amongst a set of orbital digraphs, and take the new arc set to be the union of the arcs that are in any of the orbital digraphs).  We say an orbital is {\bf self-paired} if the corresponding orbital digraph is a graph.
\end{defin}

Orbital digraphs of a group $G$ are often given in terms of their suborbits.

\begin{defin}
Let $G\le S_n$ be transitive and $x$ a point.  The orbits of $\Stab_G(x)$ are the {\bf \mathversion{bold} suborbits of $G$ with respect to $x$}.
\end{defin}

Notice that in an orbital digraph of $G$, the outneighbors of $x$ and inneighbors of $x$ are both suborbits of $G$ with respect to $x$.  We finish this section with group- and graph-theoretic terms that relate to graph quotients.

\begin{defin}
Suppose $G\le S_n$ is a transitive group that has a $G$-invariant partition ${\cal B}$ consisting of $m$ blocks of size $k$.  Then $G$ has an {\bf \mathversion{bold} induced action on ${\cal B}$}, denoted $G/{\cal B}$.  Namely, for $g\in G$, define $g/{\cal B}:{\cal B}\mapsto{\cal B}$ by $g/{\cal B}(B) = B'$ if and only if $g(B) = B'$, and set $G/{\cal B} = \{g/{\cal B}:g\in G\}$.  We also define the {\bf \mathversion{bold} fixer of ${\cal B}$ in $G$}, denoted $\fix_G({\cal B})$, to be $\{g\in G:g/{\cal B} = 1\}$.  That is, $\fix_G({\cal B})$ is the subgroup of $G$ which fixes each block of ${\cal B}$ set-wise.
\end{defin}

Observe that $\fix_G({\cal B})$ is the kernel of the induced homomorphism $G\mapsto S_{\cal B}$ that arose previously in the statement of Theorem~\ref{thm3.4Dragan}, and as such is normal in $G$.

\begin{defin}\label{block quotient def}
Let $\Gamma$ be a vertex-transitive digraph that admits an imprimitive action of the group $G$ with a $G$-invariant partition ${\cal B}$.  Define the {\bf \mathversion{bold} block quotient digraph of $\Gamma$ with respect to ${\cal B}$}, denoted $\Gamma/{\cal B}$, to be the digraph with vertex set ${\cal B}$ and arc set $$\{(B,B'):B\not = B'\in{\cal B}\textit{ and }(u,v)\in A(\Gamma)\textit{ for some }u\in B\textit{ and } v \in B'\}.$$
\end{defin}

Note that $\Aut(\Gamma)/{\cal B}\le\Aut(\Gamma/{\cal B})$.

\section{Automorphism groups of $(q,p)$-metacirculant digraphs whose full automorphism group admits only blocks of size $q$}

Our original interest in this problem arose when we were studying a particular Cayley digraph of the nonabelian group of order $21$ whose automorphism group is a nonabelian simple group but is imprimitive.  This digraph is included in the Maru\v si\v c-Scapellato characterization as a metacirculant digraph as its automorphism group contains the nonabelian group of order $21$. It does not appear elsewhere in that characterization as Maru\v si\v c and Scapellato were interesteed in finding a minimal transitive subgroup (indeed, they define a primitive graph to be one in which {\it every} transitive subgroup of the automorphism group is primitive), and so they were not concerned with its full automorphism group.  This digraph does not occur in the Praeger-Xu characterization, as they were interested in graphs (and occasionally digraphs) whose full automorphism group is primitive (indeed, they define a primitive graph to be one in which the full automorphism group is primitive).  So in neither characterization of vertex-transitive graphs of order $pq$ were such digraphs looked for.  Finally, this digraph does not arise in \cite[Theorem 3.2(1)]{Dobson2006a} since that result only holds for graphs, not digraphs.  Thus there is a small gap in the literature here.



The aim of this section of our paper is to fill in this gap. Fortunately, the work by Maru\v si\v c and Scapellato \cite{MarusicS1992} can be easily modified to help in this goal. Indeed, Maru\v si\v c and Scapellato's work is actually stronger than advertised through the statement of their results, and an additional goal of this section is to make this stronger work more apparent, as from our work on this paper we believe that such stronger statements may be useful.  We note that when writing a wreath product, we use the convention that the first group written is acting on the partition, and the second is acting within each block. Some authors, including Praeger et al, use the opposite order.

\begin{thrm}\label{gendigraphauto}
Let $\Gamma$ be a vertex-transitive digraph of order $pq$, where $q < p$ are distinct primes such that $G\le \Aut(\Gamma)$ is quasiprimitive  and has a $G$-invariant partition ${\cal B}$ with blocks of size $q$.  Additionally, suppose that ${\cal B}$ is also an $\Aut(\Gamma)$-invariant partition.  Then $G$ is an almost simple group and one of the following is true:
\begin{enumerate}
\item $\Gamma$ is a nontrivial wreath product and $\Aut(\Gamma)$ contains $G/{\cal B}\wr(\Stab_G(B)\vert_B)$ which contains a regular cyclic subgroup $R$, where $B\in {\cal B}$, or \label{wreath}
\item $\Gamma$ is isomorphic to a generalized orbital digraph of $\PSL(2,11)$ that is not a generalized orbital digraph of $\PGL(2,11)$ of order $55$. Moreover, $\Gamma$ is a Cayley digraph of the nonabelian group of order $55$, and its full automorphism group is $\PSL(2,11)$, or\label{case:2-11}
\item $\Gamma$ is isomorphic to a generalized orbital digraph of $\PSL(3,2)$ of order $21$ that is not a generalized orbital digraph of $\PGammaL(3,2)$. Moreover, $\Gamma$ is a Cayley digraph of the nonabelian group of order $21$, and its full automorphism group is $\PSL(3,2)$, or \label{case:3-2}
\item $\Gamma$ is not metacirculant; $p=2^{2^s}+1$ is a Fermat prime, and $q$ divides $p-2$. Further, the minimal transitive subgroup $G$ of $\Aut(\Gamma)$ that admits only a $G$-invariant system of $p$ blocks of size $q$ is isomorphic to $\SL(2,2^s)$, and $\Aut(\Gamma)$ is isomorphic to a subgroup of $\Aut(\SL(2,2^s))$. \label{MS-graphs}
\end{enumerate}
\end{thrm}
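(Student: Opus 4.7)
The plan is to first use quasiprimitivity to reduce $G$ to an almost simple $2$-transitive group on ${\cal B}$, and then to apply the classification (via the CFSG) of $2$-transitive groups of prime degree to enumerate the cases~(\ref{wreath})--(\ref{MS-graphs}).

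For the reduction, I would set $K = \fix_G({\cal B})$. Every element of $K$ fixes each block setwise, so the orbits of $K$ on $V(\Gamma)$ lie inside individual blocks, forcing $K$ to be intransitive on $V(\Gamma)$ unless $K = 1$. Quasiprimitivity of $G$ therefore yields $K = 1$, so $G$ acts faithfully on ${\cal B}$ as a transitive group of prime degree $p$. By the classical theorem of Burnside on transitive groups of prime degree, either $G$ is $2$-transitive on ${\cal B}$, or $G$ contains a normal subgroup $N \cong \Z_p$ acting regularly on ${\cal B}$. In the latter case $N \trianglelefteq G$ has order $p$; its orbits on $V(\Gamma)$ have size dividing $p$ and must meet every block, so each orbit has size $p$ and there are $q$ of them, contradicting quasiprimitivity. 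Hence $G$ is $2$-transitive on ${\cal B}$, and Burnside's structure theorem for finite $2$-transitive groups (combined with the same exclusion of a regular normal $\Z_p$) forces $G$ to be almost simple.

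The classification of $2$-transitive almost simple groups of prime degree says that the socle $T$ of $G$ is $A_p$, or $\PSL(d,r)$ of degree $(r^d - 1)/(r - 1) = p$, or $\PSL(2, 11)$ of degree $11$, or $M_{11}$ of degree $11$, or $M_{23}$ of degree $23$. For each candidate $T$ I would identify the possible subgroups of index $pq$ in $G$ whose cosets give the action on $V(\Gamma)$ (equivalently, vertex stabilizers of index $q$ in a block stabilizer), and then compute the generalized orbital digraphs that $G$ preserves. The family $T = \PSL(2, r)$ with $r + 1 = p$ Fermat and $q \mid r - 1 = p - 2$ produces case~(\ref{MS-graphs}) by matching the output against Theorem~\ref{thm:MS1992}; the isolated possibilities $T = \PSL(2, 11)$ with $(p, q) = (11, 5)$ and $T = \PSL(3, 2) = \PSL(2, 7)$ with $(p, q) = (7, 3)$ produce cases~(\ref{case:2-11}) and~(\ref{case:3-2}); the remaining cases collapse to the wreath product case~(\ref{wreath}).

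The hard part will be the final case-by-case verification. For cases~(\ref{case:2-11}) and~(\ref{case:3-2}), I must verify that each $\Gamma$ is Cayley over the unique nonabelian group of order $55$ (respectively $21$), which I would do by exhibiting an explicit Frobenius regular subgroup of order $55$ inside $\PSL(2, 11)$ (respectively of order $21$ inside $\PSL(3, 2)$), and then show that $\Aut(\Gamma) = T$ by ruling out any strict overgroup inside $\Sym(V(\Gamma))$; such an overgroup would necessarily contain $\PGL(2, 11)$ (respectively $\PGammaL(3, 2)$) and hence fail the stated hypothesis excluding generalized orbitals of $\PGL(2, 11)$ (respectively $\PGammaL(3, 2)$). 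For the generic case~(\ref{wreath}), the key is to combine the $\Aut(\Gamma)$-invariance of ${\cal B}$ with the almost simple structure of $G$ to force $\Gamma$ to decompose as a nontrivial wreath product $\Gamma/{\cal B} \wr \Gamma[B]$, so that $\Aut(\Gamma) \supseteq (G/{\cal B}) \wr (\Stab_G(B)\vert_B)$; existence of a regular cyclic subgroup then follows because $G/{\cal B}$ contains a $p$-cycle (its Sylow $p$-subgroup has order $p$) and $\Stab_G(B)\vert_B$ acts transitively on the $q$-element set $B$ and hence contains a $q$-cycle, the two together generating a regular copy of $\Z_{pq}$.
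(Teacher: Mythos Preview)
Your overall strategy---show $\fix_G({\cal B})=1$, deduce that $G$ is faithful and $2$-transitive on ${\cal B}$ via Burnside, identify $G$ as almost simple, and run through the CFSG list of $2$-transitive groups of prime degree---is exactly the paper's approach (which in turn follows Maru\v si\v c--Scapellato \cite{MarusicS1992} closely). The reduction paragraph is correct and matches the paper.

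There is, however, a genuine gap in your handling of case~(\ref{wreath}). You write that ``the key is to combine the $\Aut(\Gamma)$-invariance of ${\cal B}$ with the almost simple structure of $G$ to force $\Gamma$ to decompose as a nontrivial wreath product.'' But those two ingredients are present in \emph{every} case, including~(\ref{case:2-11}),~(\ref{case:3-2}), and~(\ref{MS-graphs}), so they cannot by themselves force a wreath decomposition. What actually distinguishes the generic cases is a property of the action of $\Stab_G(B)$ on $B$: in the paper's source \cite[Proposition~2.2]{MarusicS1992}, it is shown that when a certain hypothesis on this local action holds (roughly, $2$-transitivity of the block stabilizer restricted to its block), either $\Gamma$ or its complement must be disconnected, which is what produces the wreath product. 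The exceptional socles $\PSL(2,11)$, $\PSL(3,2)$, and $\SL(2,2^s)$ are precisely those for which this local hypothesis fails, and that is why they escape into cases~(\ref{case:2-11})--(\ref{MS-graphs}). Your proposal does not isolate this mechanism, and without it you have no way to separate case~(\ref{wreath}) from the others.

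A smaller point: not all of the remaining socles ``collapse to the wreath product case.'' Several candidates (certain actions of $M_{11}$, $M_{23}$, $A_p$, and some $\PSL(d,r)$) are eliminated outright as impossible by group-theoretic constraints or by \cite[Proposition~2.1]{MarusicS1992}, rather than surviving as wreath products. Your case split should account for both outcomes.
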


\begin{proof}
Almost all of the proof is contained in \cite{MarusicS1992}.  We analyze the digraph structures essentially as they do in the proof of their main theorem.

Since $G$ is quasiprimitive, it has no nontrivial intransitive normal subgroups. So $\fix_G({\cal B}) = 1$ and $G/{\cal B}\cong G$ is of prime degree $p$.  As $G$ does not have a normal Sylow $p$-subgroup, neither does $G/{\cal B}$, and so by Burnside's Theorem \cite[Corollary 3.5B]{DixonM1996} $G/{\cal B}$ is doubly-transitive, and by another theorem of Burnside \cite[Theorem 4.1B]{DixonM1996}, $G/{\cal B}$ has nonabelian simple socle.  Consequently, $G$ is nonsolvable and $G/{\cal B}\cong G$ is almost simple.

The $2$-transitive groups of prime degree are known (they are given for example in \cite[Proposition 2.4]{MarusicS1992}).  The various cases, with the one exception of $\PSL(2,2^k)$, are then analyzed in \cite{MarusicS1992}. They are almost all either rejected as impossible using group theoretic arguments or \cite[Proposition 2.1]{MarusicS1992} (which is purely about the permutation group structure and also applies to our situation), or determined to be metacirculants using \cite[Proposition 2.2]{MarusicS1992}, which is almost sufficient for our purposes. Maru\v{s}i\v{c} and Scapellato in fact showed that whenever $G$ is a group satisfying the hypothesis of \cite[Proposition 2.2]{MarusicS1992}, and $\Gamma$ is a digraph (they only considered graphs but their proof works for digraphs) with $G\le\Aut(\Gamma)$, then either $\Gamma$ or its complement is disconnected.  This implies that $\Aut(\Gamma)$ is a wreath product, and $\Aut(\Gamma)$ contains $G/{\cal B}\wr(\Stab_G(B)\vert_B)$ which contains a regular cyclic subgroup $R$, where $B\in {\cal B}$, which is what we need here.  There are two possible group structures for $G$ that do not succumb to this general approach, and \cite{MarusicS1992} use direct arguments to show that the corresponding (di)graphs are metacirculant. We need to address these exceptional possibilities separately.

The first exception occurs in the proof of \cite[Proposition 2.7]{MarusicS1992} when handling the case $G = \PSL(2,11)$ of degree $55$.  In this case it is argued that $\PSL(2,11)$ contains a regular metacyclic subgroup that has blocks of size $11$. This is a contradiction to the hypothesis of \cite[Proposition 2.7]{MarusicS1992}, so finishes the argument for them; for us, it shows that these digraphs are Cayley digraphs (as claimed), and $(q,p)$-metacirculants.  It can be verified in \texttt{magma} \cite{magma} that the only regular subgroup of $\PSL(2,11)$ in its action on $55$ points  is the nonabelian group of order $55$. Since $\PGL(2,11)$ is primitive, the digraphs that arise in this case are precisely those whose full automorphism group is $\PSL(2,11)$.

The second exception occurs at the beginning of \cite[Proposition 3.5]{MarusicS1992}, namely when $G = \PSL(3,2)$ and $\Gamma$ is of order $21$.  Here, Maru\v si\v c and Scapellato note that $\PSL(3,2)$ in its action on $21$ points has a (transitive) nonabelian subgroup of order $21$, and so $\Gamma$ is a metacirculant (which is enough for their purposes, and for us again shows that $\Gamma$ is a Cayley graph on the nonabelian group of order $21$).  By the Atlas of Finite Simple Groups the group $\PSL(3,2)$ in its representation on $21$ points has suborbits of length $1$, $2^2$, $4^2$, and $8$, with the suborbits of lengths $4$ being non self-paired. The action of $\PGammaL(3,2)$ is primitive, so again orbital digraphs of that group do not meet our hypotheses and we are interested only in those digraphs whose full automorphism group is $\PSL(3,2)$.

Finally, the case where $G$ has socle $\PSL(2,2^k) = \SL(2,2^k)$ is mainly analyzed in \cite{MarusicS1994}, where, for example, the orbital digraphs of the groups are determined.  In \cite[Theorem]{MarusicS1992}, they show if an imprimitive representation of $\SL(2,2^k)$ has order $qp$ and is contained in the automorphism group of a metacirculant digraph $\Gamma$ of order $qp$, then either it either contains the complete $p$-partite graph where each partition has size $q$ (and are the blocks of ${\cal B}$), or is contained in the complement of this graph.  These digraphs are easily seen to be circulant as either $\Gamma$ or its complement is again disconnected.  The arithmetic conditions are also derived there.
\end{proof}

From a closer analysis of the suborbits of $\PSL(2,11)$ and of $\PSL(3,2)$, we can derive additional information about the digraphs that arise in this analysis. For $\PSL(3,2)$, we use \texttt{magma} for this analysis.
The orbital digraphs of $\PSL(2,11)$ are examined in \cite[Example 2.1]{LuX2003}.
The suborbits are of length $1,4,4,4,6,12,12,$ and $12$.
Two suborbits of length $12$ are the only ones that are not self-paired, and the corresponding orbital digraphs have automorphism group $\PSL(2,11)$ which is imprimitive (as $\PSL(2,11)$ has disconnected orbital digraphs).
Thus, a generalised orbital digraph that is not a graph must use exactly one of these.
Two suborbits of length $4$ have disconnected orbital graphs and their union is an orbital graph of $\PGL(2,11)$, while all of the other suborbits are also suborbits of $\PGL(2,11)$.
Thus, in order to avoid $\PGL(2,11)$ in the automorphism group of an orbital graph, we must include exactly one of these. We summarize this extra information in the following remark.

\begin{hey}
If $\Gamma$ arises in Theorem~\ref{gendigraphauto}(\ref{case:2-11}) and is a graph, then it has a subgraph of valency $4$ that is a disconnected orbital graph of $\PSL(2,11)$, and the other disconnected orbital graph of $\PSL(2,11)$ (which is the image of this one under the action of $\PGL(2,11)$) is not a subgraph of $\Gamma$ (but $\Gamma$ itself is connected).

If $\Gamma$ arises in Theorem~\ref{gendigraphauto}(\ref{case:2-11}) and is not a graph, then it has a subdigraph of valency $12$ that is a non-self-paired orbital digraph of $\PSL(2,11)$, and whose paired orbital digraph of $\PSL(2,11)$ is not a subdigraph of $\Gamma$.

If $\Gamma$ arises in Theorem~\ref{gendigraphauto}(\ref{case:3-2}) then \texttt{magma} \cite{magma} has been used to verify that $\Gamma$ cannot be a graph. It has a subdigraph of valency $4$ that is a non-self-paired orbital digraph of $\PSL(3,2)$, and whose paired orbital digraph of $\PSL(3,2)$ is not a subdigraph of $\Gamma$.
\end{hey}

\begin{hey}
There are several instances, other than the complete graph and its complement, where a quasiprimitive group $G$ with nontrivial $G$-invariant partition ${\cal B}$, is contained in the full automorphism group of a digraph $\Gamma$ of order $qp$, but the automorphism group $\Aut(\Gamma)$ is primitive.  We list the exceptions or not in the same order as in Theorem \ref{gendigraphauto}:
\begin{enumerate}
\item There are no such cases if Theorem \ref{gendigraphauto} (\ref{wreath}) holds as $\Z_{qp}$ is a Burnside group \cite[Corollary 3.5A]{DixonM1996}.  This implies $\Aut(\Gamma)$ is doubly-transitive and so $\Aut(\Gamma)=S_{qp}$.
\item If Theorem \ref{gendigraphauto} (\ref{case:2-11}) holds then there are graphs whose automorphism group is primitive and equal to $\PGL(2,11)$ on $55$ points that contain the quasiprimitive and imprimitive representation of $\PSL(2,11)$ on $55$ points.  These graphs are explicitly described in \cite[Lemma 4.3]{PraegerX1993}.
\item  If Theorem \ref{gendigraphauto} (\ref{case:3-2}) holds, then there are graphs whose automorphism group is $\PGammaL(3,2)$ in its primitive representation on $21$ points that contains the quasiprimitive and imprimitive representation of $\PSL(3,2)$ on $21$ points.  These graphs are explicitly described in \cite[Example 2.3]{WangX1993}.
\item If Theorem \ref{gendigraphauto} (\ref{MS-graphs}) holds, then there are graphs whose automorphism group is primitive but contains the quasiprimitive and imprimitive representation of $\SL(2,2^{2^s})$ on $qp$ points.  These graphs are explicitly described in the proof of \cite[Theorem 2.1]{MarusicS1994}, starting in the last paragraph on page 192.
\end{enumerate}
\end{hey}

\section{Automorphism groups of Maru\v si\v c-Scapellato digraphs}

We turn now to the next ``gap" in information about vertex-transitive digraphs of order $pq$, where there is also an error.  The gap is that there is not an algorithm to calculate the full automorphism group of every vertex-transitive digraph of order $pq$.

The automorphism groups of circulant digraphs of order $pq$ are found in \cite{KlinP1981}.
One of the authors of this paper determined the automorphism groups of metacirculant graphs of order $pq$ that are not circulant \cite{Dobson2006a} and that argument works for digraphs as well provided that the full automorphism group is not an almost simple group (we dealt with this last possibility in the previous section). Praeger and Xu \cite{PraegerX1993} determined the full automorphism group of graphs of order $pq$ in every case where that group is acting primitively. In light of Theorem~\ref{thm:MS1992}, this means that the gap in the problem of determining the full automorphism group of vertex-transitive digraphs of order a product of two distinct primes reduces to determining the automorphism groups of imprimitive Maru\v si\v c-Scapellato digraphs of order $pq$ that are not metacirculant graphs, where $p$ is a Fermat prime, and $q$ divides $p-2$. In the process of filling this gap we will fix an error in \cite{PraegerX1993}. Unfortunately, there is also an error of omission in  \cite{PraegerX1993} that we will need to correct, but we leave this for Section~\ref{sec:prim-errors}.

For the remainder of this section, we may therefore assume that $p=2^{2^t}+1$ is a Fermat prime, and $q$ is a divisor of $p-2$ (so $t\ge1$). For convenience, we write $s=2^t$. This means we are considering a restricted subclass of Maru\v{s}i\v{c}-Scapellato digraphs, since the original definition allowed $p=2^s$ without any conditions on $s$.

The Maru\v si\v c-Scapellato digraphs are vertex-transitive digraphs that are generalized orbital digraphs of $\SL(2,2^s)$. They were first studied by Maru\v si\v c and Scapellato in \cite{MarusicS1992,MarusicS1993}.  Praeger, Wang, and Xu \cite{PraegerWX1993} determined the automorphism groups of Maru\v si\v c-Scapellato graphs of order $pq$ that are also symmetric (i.e.\ arc-transitive), partially filling the gap we are addressing here.  One of the authors of this paper studied the full automorphism groups of Maru\v si\v c-Scapellato graphs in \cite{Dobson2016} and was able to say a great deal about them, but left their complete determination as an open problem \cite[Problem 1]{Dobson2016}.

We now discuss the construction of Maru\v si\v c-Scapellato graphs, using a combination of the approaches followed in \cite{Dobson2016} and \cite{MarusicS1993}.

Let $I_2$ be the $2\times 2$ identity matrix, and set $Z = \{aI_2:a\in{\mathbb F}_{2^{s}}^*\}$, the set of all {\bf scalar matrices}.  The name $Z$ is chosen as $Z = \C(\GL(2,2^s))$, the center of $\GL(2,2^s)$. Let $\mathbb F_{2^s}^2$ denote the set of all $2$-dimensional vectors whose entries lie in $\mathbb F_{2^s}$. Clearly $\SL(2,2^s)$ is transitive on ${\mathbb F}_{2^s}^2 - \{(0,0)\}$.  It is also clear that $\SL(2,2^s)$ permutes the {\bf projective points} $\PG(1,2^s)$, where a projective point is the set of all vectors other than $(0,0)$ that lie on a line.  Notice that there are $2^s+1$ projective points, and $\PG(1,2^s)$ is an invariant partition of $\SL(2,2^s)$ in its action on ${\mathbb F}_{2^s}^2 - \{(0,0)\}$ with $2^s + 1$ blocks of size $2^s - 1$.  This action is faithful.  That is, $\SL(2,2^s)/\PG(1,2^s)\cong\SL(2,2^s)$, or equivalently, $\fix_{\SL(2,2^s)}(\PG(1,2^s)) = 1$.

It is traditional to identify the projective points with elements of ${\mathbb F}_{2^s}\cup\{\infty\}$  in the following way:  The nonzero vectors in the one-dimensional subspace generated by $(1,0)$, will be identified with $\infty$.  Any other one-dimensional subspace is generated by a vector of the form $(c,1)$, where $c\in{\mathbb F}_{2^s}$.  The nonzero vectors in the one-dimensional subspace generated by $(c,1)$ will be identified with $c$.

For $a\in{\mathbb F}_{2^s}^*$, let $\sqrt{a}$ be the unique element of ${\mathbb F}_{2^s}^*$ whose square is $a$, and

$$k_a = \left[\begin{array}{ll}
\sqrt{a} & 0\\
0 & \sqrt{a}^{-1}
\end{array}\right].$$

\noindent Set $K = \{k_a:a\in{\mathbb F}_{2^s}^*\}$.  It is clear that $k_a$ stabilizes the projective point $\infty$ and that for any generator $\omega$ of ${\mathbb F}^*_{2^s}$, $\langle k_\omega\rangle=K$ is cyclic (of order $2^s-1$), since $\sqrt{\omega}$ also generates $\mathbb F^*_{2^s}$.
Additionally, it is clear that every element of the set-wise stabilizer of $\infty$ in $\SL(2,2^k)$ has the same action on $\infty$ as some element of $K\vert_{\infty}$ (the entry in the top-right position is irrelevant to the action on $\infty$).
Let $J\le K$ be the unique subgroup of order $\ell$, where $\ell$ is a fixed divisor of $2^s - 1$ (under our assumptions, we will take $\ell=(2^s-1)/q$).  By \cite[Exercise 1.5.10]{DixonM1996}, every orbit of $J\vert_\infty$ is a block of $\SL(2,2^s)$, and so $\SL(2,2^s)$ has an invariant partition ${\cal D}_\ell$ with blocks of size $\ell$ (the blocks whose points lie ``within" the projective point $\infty$ of $\PG(1,2^s)$ -- that is, those blocks consisting of points whose second entry is $0$ -- are the orbits of $J\vert_\infty$, and the other blocks are the images of these orbits under $\SL(2,2^s)$). These blocks of $\mathcal D_\ell$ will be the vertices of the generalised orbital digraphs of $\SL(2,2^s)$, and it is the action of $\SL(2,2^s)$ on these blocks that produces the Maru\v si\v c-Scapellato digraphs. Under our assumptions, there are $pq$ blocks in $\mathcal D_\ell$.

As mentioned above, the blocks of $\mathcal D_\ell$ are the images of the orbits of $J\vert_{\infty}$ under the action of $\SL(2,2^s)$, so each lies within a point of $\PG(1,2^s)$; that is, ${\cal D}_\ell\preceq\PG(1,2^s)$.
Now $\SL(2,2^s)/{\cal D}_\ell$ is a faithful representation of $\SL(2,2^s)$ (as $\fix_{\SL(2,2^s)}(\PG(1,2^s)) = 1$ and ${\cal D}_\ell\preceq\PG(1,2^s)$).  Additionally, the $\SL(2,2^s)$-invariant partition $\PG(1,2^s))$ induces the invariant partition ${\cal B} = \PG(1,2^s)/{\cal D}_\ell$ of $\SL(2,2^s)/{\cal D}_\ell$, and $\mathcal B$ consists of $2^s + 1$ blocks whose size in general is $m = (2^s - 1)/\ell$ (under our assumptions, $m=q$).  We will use the notation ${\cal B} = \PG(1,2^s)/{\cal D}_\ell$ throughout this section.  The elements of $\mathcal B$ will be the blocks of our digraphs of order $pq$, and will have size $q$ (and there are $p$ of them), so for our purposes and henceforth in this section, we have $q=m=(2^s-1)/\ell$. It is shown in \cite{MarusicS1993} that ${\cal B}$ is the unique complete block system of $\SL(2,2^s)/{\cal D}_\ell$ with blocks of size $q$. The following result is \cite[Lemma 2.3]{MarusicS1993}.

\begin{lem}\label{orbitalint}
$\SL(2,2^s)/{\cal D}_\ell$ has $q$ suborbits of length $1$ and $q$ suborbits of length $2^s$.  Additionally, for a suborbit $S$ of length $2^s$, $\vert S\cap(c/{\cal D}_\ell)\vert = 1$ for every projective point $c\in\PG(1,2^s)$.
\end{lem}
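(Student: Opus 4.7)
The plan is to compute directly the orbits of the block-stabiliser $H_0:=\Stab_{\SL(2,2^s)}(B_0)$ on the $pq$ blocks of $\mathcal D_\ell$, where $B_0$ is a fixed choice of basepoint; since the action of $\SL(2,2^s)$ on $\mathcal D_\ell$ is faithful, these orbits are exactly the suborbits of $\SL(2,2^s)/\mathcal D_\ell$ with basepoint $B_0$. Take $B_0$ to be the orbit of $(1,0)$ under $J\vert_\infty$, so $B_0=H\times\{0\}$, where $H$ is the unique subgroup of $\F_{2^s}^*$ of order $\ell$. Because $B_0\subseteq\infty$ and $\infty$ is an $\SL(2,2^s)$-block, every element of $H_0$ stabilises $\infty$ and therefore lies in the Borel subgroup of upper-triangular matrices with diagonal $(a,a^{-1})$; such a matrix acts on $\infty$ by multiplication of the first coordinate by $a$, so it preserves $B_0$ if and only if $a\in H$. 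Hence $|H_0|=\ell\cdot 2^s$.

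The same coordinate calculation shows that $H_0$ fixes setwise each of the $q$ cosets $aH\times\{0\}$, that is, each of the $q$ blocks of $\mathcal D_\ell$ contained in $\infty$, giving $q$ suborbits of length $1$. It remains to partition the remaining $2^sq$ blocks. Using the identification $\PG(1,2^s)\setminus\{\infty\}\cong\F_{2^s}$, one checks that $H_0$ acts on this set via the affine maps $c\mapsto a^2c+ab$; letting $b$ vary over $\F_{2^s}$ already exhausts $\F_{2^s}$, so $H_0$ is transitive on the $p-1=2^s$ non-$\infty$ projective points. Solving $a^2\cdot 0+ab=0$ then shows that the $H_0$-stabiliser of the projective point $0$ is precisely $J$, a subgroup of order $\ell$.

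Next, $J$ acts on the projective point $0$ (i.e., on $\{(0,y):y\in\F_{2^s}^*\}$) by multiplication of the second coordinate by an element of $H$, and so fixes setwise each of the $q$ cosets $dH$, namely each of the $q$ blocks of $\mathcal D_\ell$ inside $0$. Combined with the transitivity of $H_0$ on the $2^s$ non-$\infty$ projective points, this shows that every $H_0$-orbit on blocks outside $\infty$ has length $|H_0|/|J|=2^s$ and meets each non-trivial projective point in exactly one block of $\mathcal D_\ell$. This produces $q$ suborbits of length $2^s$ with the claimed intersection property, completing the count.

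The only real obstacle is keeping the characteristic-$2$ matrix arithmetic and the identification of the Borel action on non-trivial projective points organised; once these ingredients are set up correctly, the lemma reduces to a routine orbit–stabiliser computation.
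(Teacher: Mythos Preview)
Your argument is correct. The paper does not give its own proof of this lemma; it simply cites it as \cite[Lemma 2.3]{MarusicS1993}. Your direct orbit--stabiliser computation with the upper-triangular (Borel) stabiliser of the block $B_0\subseteq\infty$ is exactly the natural way to establish the result, and each step checks out: $|H_0|=\ell\cdot 2^s$, the $q$ blocks inside $\infty$ are all $H_0$-fixed, the induced affine action $c\mapsto a^2c+ab$ is transitive on $\PG(1,2^s)\setminus\{\infty\}$ with point-stabiliser $J$, and $J$ fixes each of the $q$ blocks inside the projective point $0$, forcing each of the remaining $H_0$-orbits to have length $2^s$ and meet every non-$\infty$ projective point in exactly one block.

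One small remark: the lemma as stated says ``for every projective point $c$'', but of course a suborbit of length $2^s$ cannot meet the projective point containing the basepoint (all $q$ of those blocks are singleton suborbits). Your proof correctly establishes the intersection property for $c\neq\infty$, which is what is actually used later in the paper.
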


Note that this implies that the valency of an orbital digraph of $\SL(2,2^s)$ is either $1$ or $2^s$.  Additionally, as $\SL(2,2^s)/\PG(1,2^s) = \PSL(2,2^s)$ is doubly-transitive, the previous result also implies that the orbital digraphs of of $\SL(2,2^s)/{\cal D}_\ell$ having valency $2^s$ are graphs.  We now define Maru\v si\v c-Scapellato digraphs, and the fact that some orbital digraphs are graphs and some are not will cause us to naturally define these digraphs in terms of the edges which are not arcs as well as arcs that need not be edges.

\begin{defin}\label{MSdigraphs}
Let $s$ be a positive integer, and $q$ a divisor of $2^s - 1$, $S\subset\Z_q^*$, $\emptyset \subseteq T\subseteq\Z_q$, and $\omega$ a primitive element of $\F_{2^s}$.  The digraph $X(2^s,q,S,T)$ has vertex set $\PG(1,2^s)\times\Z_q$.  The out-neighbors  of $(\infty,r)$ are $\{(\infty,r + a):a\in S\}$ while the neighbors of $(\infty,r)$ are $\{(y,r+b):y\in\F_{2^s},b \in T\}$.  The out-neighbors of $(x,r)$, $x\in\F_{2^s}$, are given by $\{(x,r + a):a\in S\}$ while the neighbors of $(x,r)$ are

$$\{(\infty,r-b):b \in T\}\cup\{(x + \omega^i,-r+b+2i):i\in\Z_{2^s - 1},b \in T\}.$$

\noindent The digraph $X(2^s,q,S,T)$ is a {\bf Maru\v si\v c-Scapellato digraph}.
\end{defin}

In \cite{MarusicS1993} Maru\v si\v c and Scapellato only defined graphs, but their definition, with the obvious modifications, also define digraphs as above - see \cite{MarusicS1994a}.  Additionally, they required that $\emptyset\subset T\subset\Z_q$ as they wished their family to be disjoint from other already known families of graphs.  If $\emptyset = T$ or $T = \Z_q$ then the resulting digraphs are either disconnected or complements of disconnected digraphs, and so have automorphism group either a nontrivial wreath product or a symmetric group.  They also showed that with their definition, Maru\v si\v c-Scapellato digraphs are isomorphic to some, but not all, generalized orbital digraphs of $\SL(2,2^s)/{\cal D}_\ell$.  We prefer the more general definition that includes all generalized orbital digraphs of $\SL(2,2^s)/{\cal D}_\ell$.
However, the distinction Maru\v si\v c and Scapellato made is also important, so if $T = \emptyset$ or $\Z_q$, we will call such a Maru\v si\v c-Scapellato digraph a {\bf degenerate Maru\v si\v c-Scapellato digraph}.

We have said that the vertices of the Maru\v si\v c-Scapellato digraphs are the blocks of $\mathcal D_\ell$. The blocks of $\mathcal D_\ell$ are two-dimensional vectors that are subsets of projective points; in fact, it may be useful to the reader if we describe the blocks of $\mathcal D_\ell$ more precisely here. We assume that a primitive element $\omega$ of $\F_{2^s}$ has been chosen and is fixed. Then each block $D \in \mathcal D_\ell$ has one of the following forms:
\begin{eqnarray*}
\{(\sqrt{\omega}^{qj+r},0): 0 \le j \le \ell-1\} & &\text{(in the projective point $\infty$)}\\
\{(\sqrt{\omega}^{qj+c+r},\sqrt{\omega}^{qj+m}):0 \le j \le \ell-1\}&&\text{(in the projective point $\sqrt{\omega}^c$)}\\
\{(0,\sqrt{\omega}^{qj+r}): 0 \le j \le \ell-1\}&&\text{(in the projective point $0$),}
\end{eqnarray*}
for some fixed $0 \le r \le q-1$. The action of any element of $\SL(2,2^s)$ on any one of these sets is easy to calculate. Clearly, the definition that we have given for the Maru\v si\v c-Scapellato graphs does not have these sets as vertices; its vertices are the elements of $\PG(1,2^s) \times \Z_q$.

In \cite[Theorem 3.1]{MarusicS1993}, Maru\v si\v c and Scapellato show that the imprimitive orbital digraphs of $\SL(2,2^s)$ whose block systems come from the projective points, are precisely the Maru\v si\v c-Scapellato digraphs with the correct correspondence chosen between the blocks of $\mathcal D_\ell$ and the elements of $\PG(1,2^s)\times \Z_q$. They describe explicitly how certain matrices act on elements of $\PG(1,2^s)\times \Z_q$.

The action on the first coordinate is straightforward; the set of blocks of $\mathcal D_\ell$ that lie in a particular projective point will correspond to the set of vertices of the digraph whose label has that first coordinate. Thus, any matrix will map a vertex whose first coordinate is some projective point, to a vertex whose first coordinate is the image of that projective point under that matrix. However, the action on the second coordinate is less clear, and this is what they explain in more detail.

In Equations~(10) and~(12) of \cite{MarusicS1993}, they explain that the labeling of the vertices is chosen so that $k_\omega(\infty,r)=(\infty,r+1)$ (where $r \in \Z_q$), and $k_\omega(c,r)=(c\omega,r+1)$ for any projective point $c$ other than $\infty$. They also introduce a family of matrices $$h_b=\left[\begin{matrix}1&b\\0&1\end{matrix}\right],$$ where $b \in \F_{2^s}$. Observe that $H=\{h_b: b \in \F_{2^s}\}$ is a group, and in fact since $\F_{2^s}$ has characteristic $2$, $H$ is an elementary abelian $2$-group.
They note in the paper that the stabilizer of $\infty$ in $\SL(2,2^s)$ is the set of upper triangular matrices, and this is generated by $k_\omega$ together with $H$. In Equation~(14), they observe that under their labeling, $h_b(\infty,r)=(\infty,r)$, and $h_b(c,r)=(c+b,r)$ when $c$ is any projective point other than $\infty$.

From this point on, our assumptions that $s=2^t$, $p=2^s+1$ is a Fermat prime, and $q$ divides $p-2$ become important.
With the information we now have in hand, we are ready to understand how diagonal matrices act on the vertex labels from $\PG(1,2^s)\times \Z_q$; this will be valuable to us.

\begin{lem}\label{scalar in MS-labeling}
Let $\omega$ be a primitive root of $\F_{2^s}$, so that $\sqrt{\omega}$ is also a primitive root of $\F_{2^s}$.  Then the permutation $\sqrt{\omega}I_2$ acts on vertices labeled with elements of $\PG(1,2^s) \times \Z_q$ by satisfying

$$\sqrt{\omega} I_2(\infty,r) = (\infty, r + 1)\qquad{\text and}\qquad\sqrt{\omega}I_2(c,r) = (c,r - 1),\text{ when } c\neq \infty.$$
\end{lem}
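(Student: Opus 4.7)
The plan is to exploit the fact that $\sqrt{\omega}I_2$ lies in the centre of $\GL(2,2^s)$. Being a scalar matrix it fixes every projective point, and being central it commutes with both $k_\omega$ and with every translation $h_b$. I would therefore determine its action on a single block within each of the two distinguished projective points $\infty$ and $0$, and then propagate the second formula to every other finite projective point by commuting with the $h_c$.

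On the projective point $\infty$, whose vectors all take the form $(x,0)$, the maps $\sqrt{\omega}I_2$ and $k_\omega$ coincide: both simply multiply the first coordinate by $\sqrt{\omega}$, and the disagreement between multiplying the second coordinate by $\sqrt{\omega}$ versus by $\sqrt{\omega}^{-1}$ is invisible when that coordinate is $0$. Combined with the already-recorded formula $k_\omega(\infty,r)=(\infty,r+1)$, this immediately yields $\sqrt{\omega}I_2(\infty,r)=(\infty,r+1)$.

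On the projective point $0$, whose vectors take the form $(0,y)$, the situation is dual: $\sqrt{\omega}I_2$ multiplies the second coordinate by $\sqrt{\omega}$ while $k_\omega$ multiplies it by $\sqrt{\omega}^{-1}$, so that $\sqrt{\omega}I_2$ agrees with $k_\omega^{-1}$ on this projective point. Using $k_\omega(0,r)=(0\cdot\omega,r+1)=(0,r+1)$, this gives $\sqrt{\omega}I_2(0,r)=(0,r-1)$. For an arbitrary $c\in\F_{2^s}^*$, the formula $h_c(0,r)=(c,r)$ together with the commutation $\sqrt{\omega}I_2\,h_c=h_c\,\sqrt{\omega}I_2$ then yields $\sqrt{\omega}I_2(c,r)=h_c\sqrt{\omega}I_2(0,r)=h_c(0,r-1)=(c,r-1)$, as required.

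The only genuinely tricky point I anticipate is spotting the right auxiliary identifications: although $\sqrt{\omega}I_2$ is not in $\SL(2,2^s)$, its restriction to vectors sitting in the projective point $\infty$ (respectively $0$) happens to coincide with that of $k_\omega$ (respectively $k_\omega^{-1}$), both of whose actions on the vertex labels have already been recorded. Once those identifications are made, the rest of the argument is forced by centrality and no substantive calculation is needed.
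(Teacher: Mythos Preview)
Your argument is correct. The treatment of the projective points $\infty$ and $0$ is identical to the paper's. For a general $c\in\F_{2^s}^*$ the two proofs diverge slightly: the paper verifies by direct matrix computation that $h_ck_\omega h_c$ and $\sqrt{\omega}^{-1}I_2$ agree on vectors lying in the projective point $c$, and then evaluates $h_ck_\omega h_c(c,r)$ step by step, whereas you bypass that computation entirely by invoking the centrality of $\sqrt{\omega}I_2$ in $\GL(2,2^s)$ (so that its induced permutation on vertex labels commutes with that of $h_c$) and transporting the already-established formula at $0$ via $h_c$. Your route is shorter and conceptually cleaner; the paper's route has the minor advantage of making explicit which element of $\SL(2,2^s)$ mimics the scalar on each projective point, but that information is not needed elsewhere.
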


\begin{proof}
Observe that a point of $\F_{2^s}^2$ that lies in the projective point $\infty$ has $0$ as its second entry, so the action of $k_\omega$ on such a point must be identical to the action of $\sqrt{\omega}I_2$. Thus,  any set $D \in \mathcal D_\ell$ of points lying in the projective point $\infty$ must have the same image under $\sqrt{\omega}I_2$ as under $k_{\omega}$. If $D$ corresponds to the vertex labelled $(\infty,r)$, then since Maru\v si\v c and Scapellato have told us that $k_\omega(\infty,r)=(\infty,r+1)$, it must also be the case that $\sqrt{\omega}I_2(\infty,r)=(\infty,r+1)$.

Similarly, a point of $\F_{2^s}^2$ that lies in the projective point $0$ has $0$ as its first entry, so the action of $k_{\omega^{-1}}=(k_\omega)^{-1}$ on such a point must be identical to the action of $\sqrt{\omega}I_2$. Again,
Maru\v si\v c and Scapellato have told us that $k_\omega(0,r-1)=(0,r)$, it must be the case that $\sqrt{\omega}I_2(0,r)=(0,r-1)$.

Finally, consider any point of $\F_{2^s}^2$ that lies in the projective point $c$ where $c \neq 0, \infty$, so $c=\sqrt{\omega}^t$ for some $t$. Then the point of $\F_{2^s}^2$ has the form $(\sqrt{\omega}^{qi+t+m},\sqrt{\omega}^{qi+m})$ for some $0 \le i \le \ell-1$ and $0 \le m \le q-1$. Straightforward calculations using the field's characteristic of $2$ show that the action of the matrix $\sqrt{\omega}^{-1}I_2$ has the same effect on such a point as the action of the matrix
$$h_ck_\omega h_c=\left[\begin{matrix}\sqrt{\omega} & \sqrt{\omega}^t(\sqrt{\omega}+\sqrt{\omega}^{-1})\\ 0 & \sqrt{\omega}^{-1}\end{matrix}\right].$$
Using the information from Maru\v si\v c and Scapellato, we know that $h_c(c,r)=(c+c,r)=(0,r)$, $k_\omega(0,r)=(0,r+1)$, and $h_c(0,r+1)=(c,r+1)$. Thus, we must also have $\sqrt{\omega}^{-1}I_2(c,r)=(c,r+1)$, and hence $\sqrt{\omega}I_2(c,r)=(c,r-1)$.
\end{proof}

Let $F:\F_{2^s}\mapsto\F_{2^s}$ be the Frobenius automorphism, and so be given by $F(x) = x^2$.  The Frobenius automorphism induces an automorphism $f$ of $\GL(2,2^s)$ in the natural way - by applying $F$ to the entries of the standard matrix of an element of $\GL(2,2^s)$.
Observe that since the Frobenius automorphism is an automorphism, we have $Z \cap \SL(2, 2^s)=\{I_2\}$. Furthermore, every element of $\mathbb F_{2^s}$ is a square, and so every element of $\mathbb F_{2^s}$ arises as the determinant of some matrix in $Z$. Therefore $\langle \SL(2,2^s),Z\rangle=\GL(2,2^s)$. Since we know that $\SL(2,2^s),Z \triangleleft \GL(2,2^s)$, this implies that $\GL(2,2^s)=\SL(2,2^s)\times Z$.

We need to introduce some additional notation that will be used throughout the remainder of this section. We use $\GammaL(2,2^s)$ to denote the group $\GL(2,2^s)\rtimes\langle f\rangle$. We also use $\SigmaL(2,2^s)$ to denote $\SL(2,2^s)\rtimes\langle f\rangle$. We know that $\GL(2,2^s)=\SL(2,2^s)\times Z$, so $\GammaL(2,2^s)=(\SL(2,2^s)\times Z)\rtimes\langle f\rangle$ where the action of $f$ leaves $\SL(2,2^s)$ and $Z$ invariant.

\begin{lem}
\label{diagonals don't normalize2}
Let $p = 2^s + 1$ be a Fermat prime and $q\vert(2^s - 1)$ a prime, with $q\ell=(2^s-1)$. Let $a$ be the order of $2$ modulo $q$, let $b$ be a divisor  of $\gcd(a,s)$ with $b \neq a$, and let $1\neq L = \la f^b \ra$ (where $f$ is the automorphism of $\GL(2,2^s)$ induced by the Frobenius automorphism, as described above).

If $1\not = z/{\cal D}_\ell\in Z/{\cal D}_\ell$ , then $z^{-1} \la \SL(2,2^s),L \ra z/{\cal D}_\ell \not = \la \SL(2,2^s),L \ra/{\cal D}_{ \ell}$.
\end{lem}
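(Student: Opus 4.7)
The plan is to reduce the statement to a scalar-matrix arithmetic computation modulo $\mathcal{D}_\ell$, where the hypothesis $b<a$ will supply the obstruction. Because $Z$ is the centre of $\GL(2,2^s)$, conjugation by $z$ centralises $\SL(2,2^s)$, so $z^{-1}\la\SL(2,2^s),L\ra z = \la\SL(2,2^s), z^{-1}Lz\ra$, and it suffices to exhibit some $i\ge 1$ with $z^{-1}f^{bi}z/\mathcal{D}_\ell\notin \la\SL(2,2^s),L\ra/\mathcal{D}_\ell$. Since the Frobenius squares the scalar entries of $z$ we have $fzf^{-1}=z^2$, and iterating gives
$$z^{-1}f^{bi}z = z^{2^{bi}-1}f^{bi}.$$
Because $f^{bi}/\mathcal{D}_\ell$ already lies in $L/\mathcal{D}_\ell$, the containment question collapses to whether the scalar $z^{2^{bi}-1}/\mathcal{D}_\ell$ lies in $\la\SL(2,2^s),L\ra/\mathcal{D}_\ell$.

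The main obstacle is to show that $Z/\mathcal{D}_\ell \cap \la\SL(2,2^s),L\ra/\mathcal{D}_\ell = \{1\}$ inside $\GammaL(2,2^s)/\mathcal{D}_\ell$, and for this I would first compute the kernel $K$ of the $\GammaL(2,2^s)$-action on $\mathcal{D}_\ell$ and prove $K=\la\sqrt{\omega}^qI_2\ra\subseteq Z$. Any $k=gf^c\in K$ acts trivially on the coarser partition $\PG(1,2^s)$, so $g^{-1}|_{\PG}=f^{c}|_{\PG}=(x\mapsto x^{2^{c}})$; the latter fixes the three projective points $0,1,\infty$, and since $\GL(2,2^s)$ acts on $\PG(1,2^s)$ as the sharply $3$-transitive group $\PGL(2,2^s)$, both restrictions must be the identity, which forces $s\mid c$ and hence $f^c=1$. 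Therefore $K\subseteq\GL(2,2^s)$; and since an element of $\GL(2,2^s)$ that fixes every $1$-dimensional subspace is scalar, Lemma~\ref{scalar in MS-labeling} pins down $K$ as the scalars $\sqrt{\omega}^jI_2$ with $q\mid j$. Now if $z'\in Z$ satisfies $z'/\mathcal{D}_\ell=(\sigma f^{bj})/\mathcal{D}_\ell$ with $\sigma\in\SL(2,2^s)$, then $z'\sigma^{-1}f^{-bj}\in K\subseteq\GL$ forces $f^{bj}=1$, and then $z'\sigma^{-1}\in K\subseteq Z$ combined with $\SL(2,2^s)\cap Z=\{I_2\}$ yields $\sigma=I_2$ and $z'\in K$.

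To conclude, write $z=\sqrt{\omega}^jI_2$; the assumption $z/\mathcal{D}_\ell\neq 1$ gives $q\nmid j$, and $L\neq 1$ together with $b\mid s$ forces $b<s$, so $i=1$ yields a nontrivial $f^b\in L$. By the step above, $z^{2^b-1}/\mathcal{D}_\ell\in\la\SL(2,2^s),L\ra/\mathcal{D}_\ell$ only if $z^{2^b-1}/\mathcal{D}_\ell=1$, which holds iff $q\mid j(2^b-1)$; since $q$ is prime and $q\nmid j$, this holds iff $q\mid 2^b-1$, i.e.\ $a\mid b$. But $b\mid a$ and $b\neq a$ force $b<a$, so $a\nmid b$. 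Thus $z^{-1}f^bz/\mathcal{D}_\ell\notin\la\SL(2,2^s),L\ra/\mathcal{D}_\ell$, giving the required inequality.
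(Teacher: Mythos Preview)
Your argument is correct and follows essentially the same route as the paper's: both hinge on the commutator identity $z^{-1}f^{b}zf^{-b}=z^{2^{b}-1}$ and on the triviality of $Z/\mathcal{D}_\ell \cap \langle\SL(2,2^s),L\rangle/\mathcal{D}_\ell$, then finish with the same divisibility argument (from $q\nmid j$ and $b<a$ one gets $q\nmid j(2^{b}-1)$, contradicting $z^{2^{b}-1}\in\fix_{Z}(\mathcal{D}_\ell)=\langle\sqrt{\omega}^{\,q}I_2\rangle$). The only real difference is packaging: the paper argues by contradiction and, assuming $z^{-1}Gz/\mathcal{D}_\ell=G/\mathcal{D}_\ell$, uses normality of $Y=\langle z/\mathcal{D}_\ell\rangle$ together with $Y\cap G/\mathcal{D}_\ell=1$ to force $z/\mathcal{D}_\ell$ to commute with $f^{b}/\mathcal{D}_\ell$; you instead exhibit directly an element $z^{-1}f^{b}z/\mathcal{D}_\ell$ of the conjugate that fails to lie in $G/\mathcal{D}_\ell$. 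Your treatment of the intersection $Z/\mathcal{D}_\ell\cap G/\mathcal{D}_\ell$ is somewhat more explicit than the paper's (which just appeals to $\GL(2,2^s)=\SL(2,2^s)\times Z$ and records $\fix_{Z}(\mathcal{D}_\ell)=\langle\sqrt{\omega}^{\,q}I_2\rangle$ separately): you compute the full kernel of the $\GammaL(2,2^s)$-action on $\mathcal{D}_\ell$ and show it is contained in $Z$, which is a pleasant self-contained supplement but not a different idea.
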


\begin{proof}
Let $1\neq L =\la f^b \ra\leq \la f \ra$ and let $G= \la \SL(2,2^s),L \ra$.
Towards a contradiction, suppose that $1 \not = z/{\cal D}_\ell\in Z/{\cal D}_\ell$, and $z^{-1} G z/{\cal D}_\ell = G/{\cal D}_{ \ell}$.  Let $Y = \la z/{\cal D}_\ell\ra$.  As $Z\tl \GammaL(2,2^s)$ is cyclic and $Y$ is the unique subgroup of $Z/{\cal D}_\ell$ of order $\vert Y\vert$, $Y\tl \GammaL(2,2^s)/{\cal D}_\ell$.
Since $z^{-1} G z/{\cal D}_\ell = G/{\cal D}_{ \ell}$, it follows that   $G/{\cal D}_\ell\tl \la Y,G/{\cal D}_\ell\ra$. Moreover, since $Y\cap G/{\cal D}_\ell = 1$ (this follows from $\GL(2,2^s)=\SL(2,2^s) \times Z$), we see $\la Y,G/{\cal D}_\ell\ra\cong Y\times G/{\cal D}_\ell$.
In particular, $z/{\cal D}_\ell$ commutes with $f^b/{\cal D}_\ell$.

Choose $i$ such that $z=\sqrt{\omega} ^i I_2\in Z$, for some fixed generator $\omega$ of $\mathbb F_{2^s}^*$ ($\sqrt{\omega}$ also generates $\mathbb F_{2^s}^*$). It is straightforward to verify that $z^{-1}f^bzf^{-b} =z^{2^b-1}=\sqrt{\omega}^{i(2^b-1)} I_2$.
On the other hand, since $z/{\cal D}_\ell$ commutes with $f/{\cal D}_\ell$,
it follows that $(z^{-1}f^bzf^{-b})/{\cal D}_\ell=1$, implying $\sqrt{\omega}^{i(2^b-1)} I_2/{\cal D}_\ell=1$.

Observe that each block of $\mathcal D_\ell$ has the form $\{(x\sqrt{\omega}^{qj},y\sqrt{\omega}^{qj}): 0 \le j <\ell\},$ for some $x,y \in \mathbb F_{2^s}$. This implies that $\fix_{Z}({\cal D}_\ell)=\la \sqrt{\omega}^qI_2 \ra $. Therefore $\sqrt{\omega}^{i(2^b-1)} I_2/{\cal D}_\ell=1$ if and only if $\sqrt{\omega}^{i(2^b-1)}\in \la \sqrt{\omega}^q \ra$.
We conclude that $i(2^b-1)\equiv q \pmod{2^s-1}$. Since $q$ divides $2^s-1$, it follows that $q$ divides $i(2^b-1)$. Recall that $a$ is the order of $2$ modulo $q$ and $b<a$. This implies that $2^b\not \equiv 1 \pmod{q}$ and therefore $q$ does not divide $2^b-1$.
Since $q$ is a prime, and $q$ divides $i(2^b-1)$, this implies that $q$ divides $i$. However, this means that $z\in \la \sqrt{\omega}^qI_2 \ra= \fix_{Z}({\cal D}_\ell)$ contradicting the assumption  that $ z/{\cal D}_\ell \not =1$.
This contradiction establishes that $z^{-1}Gz/{\cal D}_\ell\not = G/{\cal D}_\ell$, as claimed.
\end{proof}

The first error that we will correct concerns the classification of symmetric Maru\v si\v c-Scapellato graphs given in \cite[Theorem, as it relates to (3.8)]{PraegerWX1993}.  In that paper, Lemma 4.9(a) states that the $q$ connected orbital graphs $X(2^s,q,\emptyset,\{t\})$ (where $t \in \Z_{q}$) of $\SL(2,2^s)$ all have automorphism group $\SigmaL(2,2^s)$ (this group is written in \cite{PraegerWX1993} as $\GammaL(2,2^s)$, but it is clear from the proof of \cite[Theorem 3.7]{PraegerWX1993} that they mean the group we are denoting by $\SigmaL(2,2^s)$).
Using Lemma~\ref{scalar in MS-labeling} to understand the action of $Z$ on these graphs, we see that for any $z \in Z$ with $z \neq 1$, there exists some $t' \in \Z_q^*$ such that $X(2^s,q,\emptyset,\{t\})^z=X(2^s,q,\emptyset,\{t-t'\})$ (more precisely, if $z=\sqrt{\omega}^iI_2$, then $t'=2i$). Thus, every such $z$ acts as a cyclic permutation on this set of $q$ graphs.
Suppose that $\Gamma$ and $\Gamma^z$ are two of these orbital digraphs with $z/\mathcal D_\ell \neq 1$ (so that the graphs are distinct), and $\Aut(\Gamma)=\SigmaL(2,2^s)$ as claimed in \cite{PraegerWX1993}.
Then $\Aut(\Gamma^z)=z^{-1}(\Aut(\Gamma))z,$ and by taking $b=1$ in Lemma~\ref{diagonals don't normalize2} we see that
$\SigmaL(2,2^s)/\mathcal D_\ell \neq z^{-1}\SigmaL(2,2^s)z/\mathcal D_\ell$,
contradicting their claim that $\Aut(\Gamma^z)=\Aut(\Gamma)$.

The mathematical error leading to the incorrect statement of \cite[Lemma 4.9]{PraegerWX1993} actually arises in \cite[Lemma 4.8]{PraegerWX1993} where it is concluded that the automorphism group $G$ of any Maru\v si\v c-Scapellato graph satisfies $\SL(2,2^s)/{\cal D}_\ell\le G\le \SigmaL(2,2^s)/{\cal D}_\ell$ (using our notation).   The proof of \cite[Lemma 4.8]{PraegerWX1993} only gives that $G/{\cal B} = \SigmaL(2,2^s)/\PG(1,2^s) = \PSigmaL(2,2^s)$.  If we consider any of the groups that are conjugate to $\SigmaL(2,2^s)$ by a scalar matrix, which we have shown in Lemma \ref{diagonals don't normalize2} are distinct modulo $\mathcal D_\ell$, the fact that scalar matrixes fix every point of $\PG(1,2^s)$ shows that every such group satisfies this equation.  With that said, the proof of \cite[Lemma 4.9 (b)]{PraegerWX1993} is correct if we strengthen the hypothesis to assume that $\SL(2,2^s)/{\cal D}_\ell\le G\le\SigmaL(2,2^s)/{\cal D}_\ell$. So we can restate their result correctly as follows, to identify the symmetric Maru\v si\v c-Scapellato digraphs whose automorphism group is contained in $\SigmaL(2,2^s)/\mathcal D_\ell$.

Note that when $G=\Aut(\Gamma)$ where $\Gamma$ is one of these Maru\v si\v c-Scapellato graphs, and $\SL(2,2^s)/{\cal D}_\ell\le G\le\SigmaL(2,2^s)/{\cal D}_\ell$, all of these actions on $\mathcal D_\ell$ are faithful, so that $\SL(2,2^s)/{\cal D}_\ell\cong \SL(2,2^s)$, $G/\mathcal D_\ell \cong G$, and $\SigmaL(2,2^s)/\mathcal D_\ell \cong \SigmaL(2,2^s)$. In \cite{PraegerWX1993}, they were to some extent studying the abstract structure of these groups, and did not make this distinction, which may have contributed to the confusion and does lead to our statement looking somewhat different from theirs.

\begin{thrm}[see \cite{PraegerWX1993}, Lemma 4.9]\label{PraegerWXsymmetric}
Let $p = 2^s + 1$ be a Fermat prime and $q\vert(2^s - 1)$ be prime. Let $\Gamma = X(2^s,q,S,T)$ be a symmetric Maru\v si\v c-Scapellato digraph and assume that $\SL(2,2^s)/\mathcal D_\ell\le \Aut(\Gamma) \le \SigmaL(2,2^s)/\mathcal D_\ell$.  Let $a$ be the order of $2$ modulo $q$.  Then $S = \emptyset$ and one of the following is true:
\begin{enumerate}
\item $T = \{0\}$, $\Gamma$ has valency $q$, and automorphism group $\SigmaL(2,2^s)/\mathcal D_\ell$.
\item There is a divisor $b$ of $\gcd(a,s)$ and $1 < a/b < q - 1$ such that $T = U_{b,i} = \{i2^{bj}:0\le j < a/b\}$.  There are exactly $(q-1)/a$ distinct graphs of this type for a given $b$, each of valency $qa/b$, and the automorphism group of each is $\la \SL(2,2^s),L\ra /{\cal D}_\ell$ where $L\le \la f\ra$ is of order $s/b$.  Up to isomorphism, there are exactly $(q-1)/b$ such graphs.
\end{enumerate}
\end{thrm}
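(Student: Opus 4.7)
The plan is to adapt the proof of \cite[Lemma 4.9]{PraegerWX1993} to the corrected hypothesis $\SL(2,2^s)/\mathcal{D}_\ell \le \Aut(\Gamma) \le \SigmaL(2,2^s)/\mathcal{D}_\ell$. I would begin by reducing to $S = \emptyset$. Because $\Aut(\Gamma) \le \SigmaL(2,2^s)/\mathcal{D}_\ell$, the automorphism group preserves the block system $\mathcal{B} = \PG(1, 2^s)/\mathcal{D}_\ell$, so it distinguishes arcs lying within a block of $\mathcal{B}$ from arcs crossing between blocks. By Lemma~\ref{orbitalint}, the within-block arcs are exactly those coming from the $S$-suborbits (each of length $1$), while the between-block arcs come from the $T$-suborbits (each of length $2^s$, meeting every block in exactly one point). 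Arc-transitivity therefore forces one of $S$ or $T$ to be empty, and since the cases appearing in the conclusion all have $T \ne \emptyset$, we get $S = \emptyset$.

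Next I would determine the admissible sets $T$. Using the vertex labeling described just before Lemma~\ref{scalar in MS-labeling}, together with the fact that the Frobenius automorphism acts on matrix entries, and hence on the parameters of blocks in $\mathcal{D}_\ell$, by squaring, a direct computation analogous to Lemma~\ref{scalar in MS-labeling} should yield that $f$ acts on vertex labels by $(c,r) \mapsto (c^2, 2r)$ (with the convention $\infty^2 = \infty$). The length-$2^s$ suborbit of $\SL(2,2^s)/\mathcal{D}_\ell$ at $(\infty,0)$ indexed by $r \in \Z_q$ is $\{(c,r) : c \in \F_{2^s}\}$, and $f^b$ sends it to the suborbit indexed by $2^b r$. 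Consequently, the image of $\Stab_{\Aut(\Gamma)}((\infty,0))$ in $\la f\ra$ is some $L = \la f^b\ra$ with $b \mid s$, and arc-transitivity demands that $T$ be a single $\la 2^b\ra$-orbit in $\Z_q$. Either $T = \{0\}$, giving Case~1 with $\Aut(\Gamma) = \SigmaL(2,2^s)/\mathcal{D}_\ell$, or $T = U_{b,i}$ for some $i \in \Z_q^*$, in which case $|T| = a/\gcd(a,b)$ matches the claimed $a/b$ precisely when $b \mid a$, so $b \mid \gcd(a,s)$, and the inequality $1 < a/b < q-1$ expresses that $T \ne \{0\}$ and $T \ne \Z_q^*$.

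Finally, I would pin down $\Aut(\Gamma)$ and complete the count. In Case~2, the containment $\la \SL(2,2^s), L\ra/\mathcal{D}_\ell \le \Aut(\Gamma)$ is clear because $L$ preserves $T = U_{b,i}$ setwise; conversely, an element $f^j$ lies in $\Aut(\Gamma)$ only if $2^j T = T$, and for this $T$ that forces $2^j \in \la 2^b\ra$, i.e., $b \mid j$. For the counting, distinct $\la 2^b\ra$-orbits on $\Z_q^*$ produce distinct graphs of this type, and graphs whose $T$-sets lie in a common $\la 2\ra$-orbit are isomorphic via an element of $\la f\ra/L$, yielding the stated counts. The main obstacle is the explicit Frobenius computation in the second step: once the formula $f(c,r) = (c^2, 2r)$ is verified, the identification of the $f^b$-action with multiplication by $2^b$ on the index set $\Z_q$ follows immediately, and all remaining conclusions reduce to elementary orbit counting in $\la 2\ra \le \Z_q^*$.
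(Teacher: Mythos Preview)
The paper does not supply its own proof of this theorem. In the paragraph immediately preceding the statement it explains that the error in \cite{PraegerWX1993} lies in Lemma~4.8 (the claim that every Maru\v si\v c--Scapellato automorphism group sits inside $\SigmaL(2,2^s)/\mathcal{D}_\ell$), and that ``the proof of \cite[Lemma 4.9 (b)]{PraegerWX1993} is correct if we strengthen the hypothesis to assume that $\SL(2,2^s)/\mathcal{D}_\ell\le G\le\SigmaL(2,2^s)/\mathcal{D}_\ell$.'' The theorem is then simply \emph{restated} with that hypothesis added, and the proof is by reference to \cite{PraegerWX1993}. Your proposal---to rerun the Praeger--Wang--Xu argument under the corrected hypothesis---is therefore exactly what the paper says works; you are just writing out what the paper leaves implicit.

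One small point in your sketch deserves tightening. Your justification for $S=\emptyset$ (``since the cases appearing in the conclusion all have $T\neq\emptyset$'') is circular. The honest argument is that if $T=\emptyset$ then $\Gamma$ is degenerate (disconnected, contained in the union of the blocks of $\mathcal{B}$), and then $\Aut(\Gamma)$ is a nontrivial wreath product, contradicting the hypothesis $\Aut(\Gamma)\le\SigmaL(2,2^s)/\mathcal{D}_\ell$. So the hypothesis itself forces $T\neq\emptyset$, and then arc-transitivity together with the block-preserving action gives $S=\emptyset$. With that fix, your outline is sound and matches the route the paper endorses.
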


Before turning to the characterization of symmetric Maru\v si\v c-Scapellato graphs of order $qp$, we will need a solution to the isomorphism problem for these graphs.  This problem has been solved in \cite{Dobson2016}, but the solution there is not suited to our needs.  The solution given in \cite{Dobson2016} is also perhaps not optimal in the sense that it requires one check $\vert\SigmaL(2,2^s)\vert = \vert\Aut(\SL(2,2^s))\vert$ maps to determine isomorphism, while we show in the next result that one only needs to check $qs$ maps.

\begin{thrm}\label{New MS iso}
Let $p = 2^s + 1$ be a Fermat prime, $q\vert(2^s - 1)$ a prime, and $\Gamma,\Gamma'$ be non-degenerate Maru\v si\v c-Scapellato digraphs.  Then $\Gamma$ and $\Gamma'$ are isomorphic if and only if $\delta(\Gamma) = \Gamma'$, where $\delta\in\la Z,f\ra/{\cal D}_\ell$.
\end{thrm}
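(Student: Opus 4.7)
The reverse direction is immediate: any $\delta \in \la Z,f\ra/{\cal D}_\ell$ normalizes $\SL(2,2^s)/{\cal D}_\ell$ inside $\GammaL(2,2^s)/{\cal D}_\ell$, so it carries generalized orbital digraphs of $\SL(2,2^s)/{\cal D}_\ell$ to generalized orbital digraphs, and hence $\delta(\Gamma) = \Gamma'$ forces $\Gamma \cong \Gamma'$. The substance of the theorem is the forward direction, and my plan for it is as follows.

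Suppose $\phi: \Gamma \to \Gamma'$ is an isomorphism. By construction $\SL(2,2^s)/{\cal D}_\ell \leq \Aut(\Gamma) \cap \Aut(\Gamma')$, and by the structural analysis of \cite{Dobson2016} both $\Aut(\Gamma)$ and $\Aut(\Gamma')$ lie inside $\GammaL(2,2^s)/{\cal D}_\ell$ (this is where non-degeneracy is used). Since $s = 2^t \geq 2$, the group $\SL(2,2^s) = \PSL(2,2^s)$ is simple and non-abelian, and the quotients $\Aut(\Gamma)/(\SL(2,2^s)/{\cal D}_\ell)$ and $\Aut(\Gamma')/(\SL(2,2^s)/{\cal D}_\ell)$ embed into the solvable group $\la Z,f\ra/{\cal D}_\ell$. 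Hence $\SL(2,2^s)/{\cal D}_\ell$ is the unique non-abelian simple normal subgroup of each of these automorphism groups, and so is characteristic. Consequently, conjugation by $\phi$ maps $\SL(2,2^s)/{\cal D}_\ell$ (inside $\Aut(\Gamma)$) onto $\SL(2,2^s)/{\cal D}_\ell$ (inside $\Aut(\Gamma')$), placing $\phi$ in $N := N_{S_{pq}}(\SL(2,2^s)/{\cal D}_\ell)$.

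The next step is to identify $N$ exactly. The centralizer $C$ of $\SL(2,2^s)/{\cal D}_\ell$ in $S_{pq}$ is semi-regular by transitivity, so $|C|$ divides $pq$; it contains $Z/{\cal D}_\ell$ of order $q$ (using $\fix_Z({\cal D}_\ell) = \la \sqrt{\omega}^q I_2\ra$ of order $\ell$); and $|C| = pq$ is ruled out, since a regular cyclic $C$ of order $pq$ would force the inclusion $\SL(2,2^s)/{\cal D}_\ell \leq C_{S_{pq}}(C) = C$, contradicting $|\SL(2,2^s)| > pq$. Thus $C = Z/{\cal D}_\ell$. Since $N/C$ embeds into $\Aut(\SL(2,2^s)) = \PGammaL(2,2^s)$, we obtain $|N| \leq q \cdot s \cdot |\SL(2,2^s)|$, while $\GammaL(2,2^s)/{\cal D}_\ell \leq N$ is visible; a count of orders gives $N = \GammaL(2,2^s)/{\cal D}_\ell$.

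Finally, using the decomposition $\GammaL(2,2^s) = \SL(2,2^s) \rtimes \la Z, f\ra$ (valid since $\SL(2,2^s) \cap Z = \{I_2\}$ in characteristic $2$ and $\la Z, f\ra$ normalizes $\SL(2,2^s)$), write $\phi = g\,\delta$ with $g \in \SL(2,2^s)/{\cal D}_\ell$ and $\delta \in \la Z, f\ra/{\cal D}_\ell$. Because $g \in \Aut(\Gamma)$ fixes $\Gamma$ set-wise, $\delta(\Gamma) = \phi(\Gamma) = \Gamma'$, as required. The principal obstacle is the very first step, namely establishing $\Aut(\Gamma) \leq \GammaL(2,2^s)/{\cal D}_\ell$ for every non-degenerate Maru\v{s}i\v{c}--Scapellato digraph; this is the structural content of \cite{Dobson2016}, and is in harmony with the rigidity proved directly in Lemma~\ref{diagonals don't normalize2}, which forbids non-trivial scalar conjugates of $\SL(2,2^s)$ from also preserving ${\cal D}_\ell$ together with the digraph structure.
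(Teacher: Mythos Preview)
Your computation of the normalizer $N = \GammaL(2,2^s)/{\cal D}_\ell$ and the final decomposition $\phi = g\delta$ match the paper's argument essentially verbatim. The difficulty is the step you flag as ``the principal obstacle'': you assert that \cite{Dobson2016} gives $\Aut(\Gamma)\le \GammaL(2,2^s)/{\cal D}_\ell$ for every non-degenerate Maru\v si\v c--Scapellato digraph, and then deduce that $\SL(2,2^s)/{\cal D}_\ell$ is characteristic in $\Aut(\Gamma)$, forcing $\phi$ into the normalizer. But that containment is \emph{false} in general. There are non-degenerate Maru\v si\v c--Scapellato graphs with primitive automorphism group---for instance $S=\Z_q^*$, $\vert T\vert=1$ with $p=k^2+1$, $q=k+1$, where $\Aut(\Gamma)$ is (a conjugate of) ${\rm P}\Gamma{\rm Sp}(4,k)$, an almost simple group with socle $\PSp(4,k)$. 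In such cases $\SL(2,2^s)/{\cal D}_\ell$ is not even normal in $\Aut(\Gamma)$, so your characteristicity argument collapses, and there is no reason an arbitrary isomorphism $\phi$ should normalize $\SL(2,2^s)/{\cal D}_\ell$.

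What \cite[Theorem~1]{Dobson2016} actually supplies (and what the paper invokes) is a different statement: if $\Gamma$ and $\Gamma'$ are isomorphic, then \emph{some} isomorphism between them lies in the normalizer of $\SL(2,2^s)$. This does not claim that every automorphism of $\Gamma$ normalizes $\SL(2,2^s)$, and so it is compatible with the primitive examples above. Once that is granted, one takes $\delta$ in the normalizer, writes it as $abc$ with $c\in\SL(2,2^s)$, $b\in\la f\ra$ (since $\Aut(\PSL(2,2^s))=\PSigmaL(2,2^s)$), and $a$ in the centralizer, and then identifies the centralizer with $Z/{\cal D}_\ell$ exactly as you do. So your proof is correct from the point where $\phi\in N$ onward; the gap is that your route to that point passes through a claim that does not hold.
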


\begin{proof}
It is shown in \cite[Theorem 1]{Dobson2016} that $\Gamma' = \delta(\Gamma)$ for some $\delta$ if and only if this occurs for a $\delta$ that normalizes $\SL(2,2^s)$.  This normalizer is $\GammaL(2,2^s)$ as every element of $S_{qp}$ that normalizes $\SL(2,2^s)$ can be written in the form $abc$, where $c\in\SL(2,2^s)$, $b\in\Aut(\SL(2,2^s))$, and $a$ is contained in the centralizer of $\SL(2,2^s)$ in $S_{qp}$.  As $\SL(2,2^s) = \PSL(2,2^s)$ and $\Aut(\PSL(2,2^s)) = \PSigmaL(2,2^s)$, we may take $b\in\la f\ra$.  As the centralizer in $S_{qp}$ of $\SL(2,2^s)$ has order $q$ by \cite[Theorem 4.2A (i)]{DixonM1996} and \cite[Lemma 2.1]{MarusicS1993}, we see that $a\in Z/{\cal D}_\ell$.
\end{proof}



We are now ready to determine the symmetric Maru\v si\v c-Scapellato digraphs of order a product of two distinct primes with imprimitive automorphism group.

\begin{thrm}\label{MS symmetric}
Let $s = 2^t$, $p = 2^s + 1$ be a Fermat prime, and $q\vert(2^s - 1)$ be prime. Let $\Gamma = X(2^s,q,S,T)$ be a nondegenerate symmetric Maru\v si\v c-Scapellato digraph constructed with the primitive root $w$ of $\F_{2^s}$ with an imprimitive automorphism group.  Let $a$ be the order of $2$ modulo $q$, and $d = \sqrt{w}I$.  Then $S = \emptyset$ and one of the following is true:
\begin{enumerate}
\item $T = \{-2k\}$, $\Gamma$ has valency $q$, and automorphism group $d^{-k}\SigmaL(2,2^s)d^k/{\cal D}_\ell$, $k\in\Z_q$.
\item There is a divisor $b$ of $\gcd(a,2^s)$, $1 < d/e < q - 1$, and $k\in\Z_q$ such that $T = U_{b,i,k} = \{i2^{bj} - 2k:0\le j < a/b\}$.  There are exactly $(q-1)b/a$ distinct graphs of this type for a given $b$ and $k$, each of valency $qa/b$, and the automorphism group of each is $d^{-k}\la SL(2,2^s),L\ra d^k/{\cal D}_{\ell}$ where $L\le \la f\ra$ is of order $2^s/b$.  Up to isomorphism, there are exactly $(q-1)/b$ such graphs.
\end{enumerate}
\end{thrm}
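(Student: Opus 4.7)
The plan is to reduce this statement to Theorem~\ref{PraegerWXsymmetric} via conjugation by a suitable power of $d$. Once we locate $\Aut(\Gamma)$ inside $\GammaL(2,2^s)/{\cal D}_\ell$, the conjugate $d^k(\Gamma)$ for an appropriate $k\in\Z_q$ will have automorphism group between $\SL(2,2^s)/{\cal D}_\ell$ and $\SigmaL(2,2^s)/{\cal D}_\ell$, so Theorem~\ref{PraegerWXsymmetric} will apply to it and pulling back along $d^{-k}$ will give the description claimed here.

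First I would show $\SL(2,2^s)/{\cal D}_\ell\triangleleft\Aut(\Gamma)\le\GammaL(2,2^s)/{\cal D}_\ell$. The containment $\SL(2,2^s)/{\cal D}_\ell\le\Aut(\Gamma)$ is automatic because $\Gamma$ is a generalized orbital digraph of $\SL(2,2^s)/{\cal D}_\ell$. Since $\Aut(\Gamma)$ is imprimitive, Theorem~\ref{gendigraphauto}(\ref{MS-graphs}) makes it almost simple with socle isomorphic to $\PSL(2,2^s)\cong\SL(2,2^s)$, and this socle must coincide with the copy $\SL(2,2^s)/{\cal D}_\ell$ already sitting inside $\Aut(\Gamma)$; hence it is normal there. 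The normalizer computation from the proof of Theorem~\ref{New MS iso} identifies $N_{S_{qp}}(\SL(2,2^s)/{\cal D}_\ell)$ as $\GammaL(2,2^s)/{\cal D}_\ell=(\SL(2,2^s)\times Z)\rtimes\la f\ra/{\cal D}_\ell$, producing the stated upper bound.

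Next, let $L\le\la f\ra$ be the image of $\Aut(\Gamma)$ under the quotient $\GammaL(2,2^s)/(\SL(2,2^s)\cdot Z)\cong\la f\ra$, and choose a generator $f^b$ of $L$. After multiplying by an element of $\SL(2,2^s)/{\cal D}_\ell\le\Aut(\Gamma)$, I can pick $\sigma\in\Aut(\Gamma)$ of the form $\sigma=\tilde z f^b$ with $\tilde z\in Z/{\cal D}_\ell$. The Frobenius relation $f(d)=d^2$ gives $d^{-k}f^b d^k=d^{k(2^b-1)}f^b$. In both parts of the statement $b<a$ (trivially $b=1<a$ in part~(1), and by the hypothesis $1<a/b$ in part~(2)), so $\gcd(2^b-1,q)=1$ and there is a unique $k\in\Z_q$ with $d^{k(2^b-1)}=\tilde z^{-1}$. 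For this $k$ we have $d^k\Aut(\Gamma)d^{-k}=\la\SL(2,2^s),L\ra/{\cal D}_\ell$, so $\Gamma^*:=d^k(\Gamma)$ has automorphism group precisely $\la\SL(2,2^s),L\ra/{\cal D}_\ell$.

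Theorem~\ref{PraegerWXsymmetric} now identifies $\Gamma^*$ as either $X(2^s,q,\emptyset,\{0\})$ or $X(2^s,q,\emptyset,U_{b,i})$. Using Lemma~\ref{scalar in MS-labeling} to track the action of $d^{-k}$ on the second coordinates (a shift by $-k$ on the $\infty$-fibre and by $+k$ elsewhere), a direct computation using Definition~\ref{MSdigraphs} transports the connection set of $\Gamma^*$ to that of $\Gamma=d^{-k}(\Gamma^*)$, yielding the stated shifted sets $\{-2k\}$ and $U_{b,i,k}$. The counting assertions then follow by combining the counts in Theorem~\ref{PraegerWXsymmetric} with the isomorphism criterion of Theorem~\ref{New MS iso}: the $q$ conjugates produce families with pairwise distinct automorphism groups by Lemma~\ref{diagonals don't normalize2}, but their $q$ graphs are mutually isomorphic via the $Z$-action, so the number of isomorphism classes is unchanged from Theorem~\ref{PraegerWXsymmetric}. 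The main technical obstacle throughout is guaranteeing invertibility of $2^b-1$ modulo $q$, which is precisely why the degenerate cases and the case $b=a$ must be excluded.
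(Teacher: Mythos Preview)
Your overall strategy coincides with the paper's: locate $\Aut(\Gamma)$ inside $\GammaL(2,2^s)/{\cal D}_\ell$, conjugate by a scalar into $\SigmaL(2,2^s)/{\cal D}_\ell$, apply Theorem~\ref{PraegerWXsymmetric}, and pull back using Lemma~\ref{scalar in MS-labeling}. The counting and the identification of $T$ proceed exactly as in the paper.

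There is, however, a genuine circularity in your step that produces the conjugating element. You need $\gcd(2^b-1,q)=1$ to solve $d^{k(2^b-1)}=\tilde z^{-1}$, and you justify this by saying ``in both parts of the statement $b<a$''. But $b<a$ is part of the \emph{conclusion} of the theorem (via Theorem~\ref{PraegerWXsymmetric}), not something you know at this stage: your $b$ is defined purely from $L=\mathrm{im}(\Aut(\Gamma))\le\la f\ra$, and you have no a~priori control on how $b$ compares with $a$. The paper sidesteps this entirely by a Sylow argument: once one knows $\Aut(\Gamma)\cap Z=1$ (which both you and the paper use implicitly, and which follows from nondegeneracy), the quotient $\Aut(\Gamma)/\SL(2,2^s)$ embeds in $Z\rtimes\la f\ra$ as a $2$-group, hence is conjugate into the Sylow $2$-subgroup $\la f\ra$, and Schur--Zassenhaus lets one take the conjugating element in $Z$. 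No arithmetic on $b$ is needed.

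If you prefer to keep your explicit computation, the fix is short: if $a\mid b$ then $f^b$ centralises $Z/{\cal D}_\ell$, so $\sigma^{s/b}=\tilde z^{\,s/b}\in Z\cap\Aut(\Gamma)=1$; since $s/b$ is a power of~$2$ and $\tilde z$ has odd order, this forces $\tilde z=1$, and $k=0$ works. Otherwise $a\nmid b$, so $2^b-1$ is a unit modulo $q$ and your equation is solvable. (There is also a harmless sign slip: with your conventions $d^{-k}\sigma d^k=f^b$, so it is $d^{-k}\Aut(\Gamma)d^{k}$ that lands in $\SigmaL$, whereas $\Gamma^*=d^k(\Gamma)$ has automorphism group $d^{k}\Aut(\Gamma)d^{-k}$; one of the two $k$'s should be negated.)
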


\begin{proof}
For the proof of this result, we will abuse notation by writing $H$ instead of $H/{\cal D}_\ell$ where $H/{\cal D}_\ell\le d^{-k}\SigmaL(2,2^s)d^k/{\cal D}_\ell$, and will similarly abuse notation for elements of $d^{-k}\SigmaL(2,2^s)d^k/{\cal D}_\ell$.  This should cause no confusion.  The result follows by Theorem \ref{PraegerWXsymmetric} if $\Aut(\Gamma)\le \SigmaL(2,2^s)/$, in which case $k = 0$.  Suppose that $\Aut(\Gamma)$ is not contained in $\SigmaL(2,2^s)/$.  As $\Aut(\Gamma)$ is imprimitive and contains $\SL(2,2^s)/$, by \cite[Theorem]{MarusicS1992} either $\Gamma$ is metacirculant or the only invariant partition of $\Aut(\Gamma)$ is ${\cal B} = \PG(1,2^s)$ which is also the only invariant partition of $\SL(2,2^s)$.  If $\Gamma$ is metacirculant, then it is degenerate by \cite[Theorem 2.1]{MarusicS1994}.  Hence $\Gamma$ is not metacirculant and so $\fix_{\Aut(\Gamma)}({\cal B}) = 1$ by \cite[Theorem 3.4]{Marusic1988}.  Then $\Aut(\Gamma)/{\cal B}\cong\Aut(\Gamma)$ is a group of prime degree $p$.  By \cite[Corollary 3.5B]{DixonM1996} we have $\Aut(\Gamma)/{\cal B}\le\AGL(1,p)$ or is a doubly-transitive group.  By \cite[Theorem 4.1B]{DixonM1996} we see either $\Aut(\Gamma)/{\cal B}\le\AGL(1,p)$ or is a doubly-transitive group with nonabelian simple socle.  If $\Aut(\Gamma)/{\cal B}\le\AGL(1,p)$ then $\Aut(\Gamma)$ contains a normal subgroup of order $p$, and so has blocks of size $p$, a contradiction.  Thus $\Aut(\Gamma)/{\cal B}$ is a doubly-transitive group with nonabelian simple socle.  By \cite[Lemmas 4.5, 4.6, and 4.7]{PraegerWX1993} we have $\SL(2,2^s)\tl\Aut(\Gamma)$.

Now, $N_{S_V}(\SL(2,2^s)) = \la \SL(2,2^s),f,Z\ra=\GammaL(2,2^s)= (\SL(2,2^s)\times Z)\rtimes \la f\ra$.   Thus every element of $N_{S_V}(\SL(2,2^s))$, and hence $\gamma\in\Aut(\Gamma)$, can be written as $\gamma = f^iz\omega$, where $\omega\in \SL(2,2^s)$, $z\in Z$, and $i$ is a positive integer.  Of course, as $\SL(2,2^s)\le\Aut(\Gamma)$, $f^iz\in\Aut(\Gamma)$ if and only if $f^iz\omega\in\Aut(\Gamma)$ for some $\omega\in\SL(2,2^s)$.  Then $H = \Aut(\Gamma)\cap\{f^iz:i\in\Z,z\in Z\}$ is a subgroup of $\Aut(\Gamma)$, and $\Aut(\Gamma)/\SL(2,2^s)\cong H$ by the First Isomorphism Theorem.  As $\Aut(\Gamma)\cap Z = 1$, $H$ is isomorphic to a subgroup of $\la f\ra$, and hence   $\Aut(\Gamma)/\SL(2,2^s)$ is isomorphic to a cyclic $2$-subgroup.  Then $\Aut(\Gamma)/\SL(2,2^s)$ is conjugate by an element $z/\SL(2,2^s)\in Z/\SL(2,2^s)$ to a subgroup of $\la f\ra/\SL(2,2^s)$, and so $z^{-1}\Aut(\Gamma)z\le \SigmaL(2,2^s)$.  Then $\Aut(\Gamma)\le z\SigmaL(2,2^s)z^{-1}$ and $z^{-1}(\Gamma)$ is a nondegenerate symmetric Maru\v si\v c-Scapellato digraph with $\Aut(z^{-1}(\Gamma))\le\SigmaL(2,2^s)$, and so is given by Theorem \ref{PraegerWXsymmetric}.

In order to verify the numbers of symmetric Maru\v si\v c-Scapellato digraphs are as in the result, we need only to see different scalar matrices do indeed give different symmetric Maru\v si\v c-Scapellato digraphs. Suppose that there exist two non-isomorphic symmetric Maru\v si\v c-Scapellato graphs $\Gamma_1$ and $\Gamma_2$ with automorphism groups contained in $\SigmaL(2,2^s)$, such that $z_1(\Gamma_1)=z_2(\Gamma_2)$, for $z_1,z_2\in Z$.
Then $\Gamma_2=z_2^{-1}z_1(\Gamma_1)$. This implies that $\Gamma_1$ and $\Gamma_2$ are of the same valency and since by Theorem~\ref{PraegerWXsymmetric} all Maru\v si\v c-Scapellato graphs with the same valency have the same automorphism groups, it follows that $\Aut(\Gamma_1)=\Aut(\Gamma_2)=\la SL(2,2^s), L \ra$, where $L=\la f^b \ra$. By Lemma~\ref{diagonals don't normalize2} it follows that $z^{-1}\la SL(2,2^s), L \ra z= \la SL(2,2^s), L \ra$ holds only when $z=1$.
On the other hand, since $\Gamma_2=z_2^{-1}z_1(\Gamma_1)$ it follows that $\Aut(\Gamma_2)=(z_2^{-1}z_1) \Aut(\Gamma_1)(z_2^{-1}z_1)^{-1}$, and hence $z_1=z_2$, which implies that different scalar matrices do indeed give different symmetric Maru\v si\v c-Scapellato graphs.

Let $z^{-1}(\Gamma) = X(2^s,q,\emptyset,  T)$, where $T = \{0\}$ or $T = U_{b,i} = \{i2^{bj}:0\le j < a/b\}$.  Let $z = d^k$ for some positive integer $k$. We need only verify that $T = \{2k\}$ or $U_{b,i,k} = \{i2^{bj} + 2k:0\le j < a/b\}$.  Now, in $z^{-1}(\Gamma)$, the neighbors of $(\infty,r)$ are $\{(y,r + u):y\in\F_{2^s},u\in U\}$.  Considering $z(z^{-1}(\Gamma)) = \Gamma$ and applying Lemma \ref{scalar in MS-labeling}, we see the neighbors of $(\infty,r + k)$ in $\Gamma$ are $\{(y,r + u - k):y\in\F_{2^s},u\in U\}$.  Equivalently, the neighbors of $(\infty,r)$ in $\Gamma$ are $\{(y,r +u - 2k):y\in\F_{2^s},u\in U\}$ and the result follows.
\end{proof}

We now determine the full automorphism group of any Maru\v si\v c-Scapellato digraph.

\begin{thrm}
Let $p = 2^s + 1$ be a Fermat prime, $q\vert(2^s - 1)$ be prime, and $\Gamma$ be a Maru\v si\v c-Scapellato digraph of order $qp$.  Then $\Gamma$ or its complement is $X(2^s,q,S,T)$ and one of the following is true.
\begin{enumerate}
\item $\Aut(\Gamma)$ is primitive and
\begin{enumerate}
\item $s=2$, $qp = 15$, $S = \Z_3^*$ and $T = \{0\},\{1\}$, or $\{2\}$.  Then $\Gamma$ is isomorphic to the line graph of $K_6$ and has automorphism group $d^{-1}\SigmaL(2,4)d\cong S_6$ for some $d\in Z$.
\item $p = k^2 + 1$, $q = k + 1$, $S = \Z_q^*$ and $\vert T\vert = 1$.  Then there exists $d\in Z/{\cal D}_\ell$ such that $\Aut(\Gamma) =  d^{-1}{\rm P}\Gamma{\rm Sp}(4,k)d$.
\item $S = \Z_q^*$, $T = \Z_q$, and $\Gamma$ is a complete graph with automorphism group $S_{qp}$.
\end{enumerate}
\item $\Aut(\Gamma)$ is imprimitive and
\begin{enumerate}
\item $S < \Z_q^*$, $T = \Z_q$, $\Gamma$ is degenerate, and $\Aut(\Gamma)\cong S_p\wr \Aut(\Cay(\Z_q,S))$.
\item In all other cases there exists $L\le \la f/{\cal D}_\ell\ra$ and $d\in Z/{\cal D}_\ell$ such that
$$\Aut(\Gamma) = d^{-1}\la\SL(2,2^s), L\ra d/{\cal D}_\ell$$
\noindent which is isomorphic to a subgroup of $\SigmaL(2,2^s)/{\cal D}_\ell$ that contains $\SL(2,2^s)/{\cal D}_\ell$.
\end{enumerate}
\end{enumerate}
\end{thrm}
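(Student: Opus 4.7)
The plan is to proceed by a case split on whether $\Aut(\Gamma)$ is primitive or imprimitive, and within the imprimitive case to separate the degenerate possibilities from the generic ones. Throughout we use that $\Gamma$ and its complement share an automorphism group, which allows us to assume that $\Gamma = X(2^s,q,S,T)$ (rather than its complement) in the arguments that follow.

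The primitive case will rely on the classification of primitive groups of degree $qp$ (from the Praeger--Xu program) together with the fact that $\SL(2,2^s)/\mathcal{D}_\ell \le \Aut(\Gamma)$ in its imprimitive representation on $qp$ points. One enumerates the primitive groups on $qp$ points that contain such a transitive but imprimitive copy of $\SL(2,2^s)$. Up to conjugacy in $S_{qp}$ there are only the three possibilities listed: for $s=2$ and $qp=15$ the exceptional isomorphism $\PSL(2,4)\cong A_5$ gives a line-graph-of-$K_6$ realisation with group $S_6$; for $p=k^2+1$, $q=k+1$ one recovers the embedding of $\SL(2,q-1)$ inside $\PGammaSp(4,k)$ acting on isotropic points (which is exactly the Maru\v{s}i\v{c}--Scapellato situation); and finally the complete graph itself. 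In each subcase the scalar conjugate $d \in Z/\mathcal{D}_\ell$ is determined by which labelling of $\PG(1,2^s)\times\Z_q$ was used in constructing $X(2^s,q,S,T)$, as in Theorem~\ref{MS symmetric}.

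For the imprimitive case, first dispose of the degenerate subcase $T=\emptyset$ or $T=\Z_q$. Here the definition of $X(2^s,q,S,T)$ makes $\Gamma$ or its complement a disjoint union of $p$ copies of $\Cay(\Z_q,S)$, and hence $\Aut(\Gamma)\cong S_p\wr\Aut(\Cay(\Z_q,S))$ as in~(2a). For the non-degenerate imprimitive case we copy the skeleton of the proof of Theorem~\ref{MS symmetric}, but without appealing to arc-transitivity. By \cite[Theorem 2.1]{MarusicS1994} $\Gamma$ is not metacirculant, and Theorem~\ref{thm:MS1992} forces the unique invariant partition of $\Aut(\Gamma)$ to be $\mathcal{B}=\PG(1,2^s)/\mathcal{D}_\ell$. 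Theorem~\ref{thm3.4Dragan} then gives $\fix_{\Aut(\Gamma)}(\mathcal{B})=1$, and the Burnside-theoretic argument (cf.\ \cite[Corollary 3.5B]{DixonM1996} and \cite[Theorem 4.1B]{DixonM1996}) shows that $\Aut(\Gamma)/\mathcal{B}$ is $2$-transitive of prime degree $p$ with nonabelian simple socle. Applying \cite[Lemmas 4.5, 4.6, 4.7]{PraegerWX1993} (whose proofs only use the quasiprimitive/imprimitive structure, not the symmetry of $\Gamma$) we conclude $\SL(2,2^s)\trianglelefteq \Aut(\Gamma)$, so $\Aut(\Gamma)\le N_{S_{qp}}(\SL(2,2^s))=\GammaL(2,2^s)/\mathcal{D}_\ell$. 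Writing any element of this normaliser in the form $f^i z \omega$ with $\omega\in\SL(2,2^s)$, $z\in Z$, and using that $\Aut(\Gamma)\cap Z/\mathcal{D}_\ell=1$ together with $\GL(2,2^s)=\SL(2,2^s)\times Z$, we see $\Aut(\Gamma)/\SL(2,2^s)$ embeds as a cyclic $2$-subgroup of $\langle f\rangle$; so there is $d\in Z/\mathcal{D}_\ell$ and $L\le\langle f\rangle$ such that $\Aut(\Gamma)=d^{-1}\langle\SL(2,2^s),L\rangle d/\mathcal{D}_\ell$, establishing~(2b).

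The expected main obstacle is the primitive case~(1b): recognising that the only non-obvious primitive overgroup of $\SL(2,2^s)/\mathcal{D}_\ell$ on $qp$ points is the symplectic one $\PGammaSp(4,k)$ requires either a dedicated classification (piecing together results of Praeger and Xu on primitive groups of degree $qp$, together with Liebeck's list of maximal subgroups of finite classical groups), or a direct verification that the relevant embedding and its scalar conjugates are the only primitive groups containing that particular imprimitive $\SL(2,2^s)$. Once this primitive catalogue is in place, the remainder of the argument is mechanical: it is identifying the precise scalar conjugate $d$ (using Lemma~\ref{scalar in MS-labeling} to track how $Z$ acts on labels) and verifying via Lemma~\ref{diagonals don't normalize2} that distinct scalar conjugates produce genuinely different digraphs, exactly as in the closing paragraphs of Theorem~\ref{MS symmetric}.
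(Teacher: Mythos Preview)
Your proposal is correct, and for the primitive case (1) and the degenerate imprimitive case (2a) your route matches the paper's: both simply quote the Maru\v si\v c--Scapellato/Praeger--Xu classification for the primitive overgroups, and both handle the degenerate case by recognising $\Gamma$ (or its complement) as a wreath product of graphs.

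The genuine difference is in case (2b). The paper does \emph{not} rerun the normaliser argument from Theorem~\ref{MS symmetric}. Instead it decomposes $\Gamma$ as a union $\Gamma_1\cup\cdots\cup\Gamma_r$ of orbital digraphs of $\Aut(\Gamma)$, observes that each connected $\Gamma_i$ is itself a symmetric Maru\v si\v c--Scapellato graph (since the connected orbital digraphs of $\SL(2,2^s)$ are already graphs), applies Theorem~\ref{MS symmetric} to each $\Gamma_i$ to get $\Aut(\Gamma_i)=d_i^{-1}\langle\SL(2,2^s),L_i\rangle d_i$, and then intersects. A short case split (the $d_i$ agree, or they do not) together with the observation that any group between $\SL(2,2^s)$ and $\SigmaL(2,2^s)$ has the form $\langle\SL(2,2^s),L\rangle$ finishes the argument.

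Your direct approach---showing $\SL(2,2^s)\trianglelefteq\Aut(\Gamma)$ via \cite[Lemmas 4.5--4.7]{PraegerWX1993} and then analysing the normaliser $\GammaL(2,2^s)/\mathcal D_\ell$---is arguably cleaner for the bare structural conclusion of (2b), since it avoids the orbital decomposition and the $d_i$ casework entirely. You are right that those lemmas from \cite{PraegerWX1993} are purely about the $2$-transitive quotient $\Aut(\Gamma)/\mathcal B$ and do not use arc-transitivity of $\Gamma$. The paper's route, by contrast, reuses Theorem~\ref{MS symmetric} as a black box and thereby yields the finer information that each symmetric orbital component already has automorphism group of the required shape; this is what underlies the subsequent remark about computing $\Aut(\Gamma)$ quickly. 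Either approach is sound; yours is shorter, the paper's is more modular.
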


\begin{proof}
As in the previous result, we will abuse notation and drop the ${\cal D}_\ell$'s from our notation.  The case when $\Aut(\Gamma) = S_{qp}$ is trivial.  The other Maru\v si\v c-Scapellato graphs of order $qp$ with primitive automorphism group were calculated in \cite{MarusicS1994} and their automorphism groups computed in \cite{PraegerX1993}.  This gives the information in the result with $d = 1$.  We observe that $\SigmaL(2,2^s)$ is contained in $\Aut(\Gamma)$, and so the only possible isomorphisms with other Maru\v si\v c-Scapellato graphs are with elements of $Z$ by Lemma \ref{New MS iso}.  That the elements of $Z$ give different graphs follows as they normalize $\SL(2,2^s)$ but are not contained in $\Aut(\Gamma)$.

If $\Aut(\Gamma)$ is imprimitive and $\Gamma$ is degenerate, then $T = \Z_q$ and as $\Aut(\Gamma)$ is imprimitive, $S\not = \Z_q^*$ as otherwise $\Gamma = K_{qp}$ has a primitive automorphism group.  It is then not difficult to see that $\Gamma\cong K_p\wr\Cay(\Z_q,S)$ and by \cite[Theorem 5.7]{DobsonM2009} $\Aut(\Gamma)\cong S_p\wr\Aut(\Cay(\Z_q,S))$.

If $\Aut(\Gamma)$ is imprimitive and $\Gamma$ is non-degenerate, then $\Gamma$ is a generalized orbital digraph of $\Aut(\Gamma)$. We write $\Gamma = \Gamma_1\cup\dotsm \cup \Gamma_r$ where each $\Gamma_i$ is an orbital digraph of $\Aut(\Gamma)$.  Note that as $\Aut(\Gamma)$ is imprimitive, some orbital digraph of $\Aut(\Gamma)$ is disconnected.  Also, each connected orbital digraph of $\Aut(\Gamma)$ is either symmetric or $1/2$-transitive, and as each orbital digraph of $\Gamma$ is a generalized orbital digraph of $\SL(2,2^s)$, we see each connected orbital digraph of $\Aut(\Gamma)$ is symmetric as each connected orbital digraph of $\SL(2,2^s)$ is symmetric.

If there exist connected orbital digraphs of $\Aut(\Gamma)$ that are subdigraphs of $\Gamma$ whose automorphism groups are contained in $d^{-1}\SigmaL(2,2^s)d$ and $e^{-1}\SigmaL(2,2^s)e$ for $d\not = e$ both in $Z$, then
$$\SL(2,2^s)\le\Aut(\Gamma)\le d^{-1}\SigmaL(2,2^s)d\cap e^{-1}\SigmaL(2,2^s)e\le d^{-1}\SigmaL(2,2^s)d.$$
Thus $\Aut(\Gamma)$ is a subgroup of $d^{-1}\SigmaL(2,2^s)d$ that contains $\SL(2,2^s)$.  Now let $\SL(2,2^s)\le K\le\SigmaL(2,2^s)$.  As $\SL(2,2^s)\tl \SigmaL(2,2^s)$, every element of $\SigmaL(2,2^s)$, and consequently every element of $K$, can be written as $gf^c$ for some $g\in\SL(2,2^s)$ and integer $c$.  As $\SL(2,2^s)\le K$, we have $gf^c\in K$ if and only if $f^c$ in $K$.  We conclude $K = \la\SL(2,2^s),L\ra$, where $L\le \la f\ra$ consists of all powers of $f$ contained in $K$.  Then $d^{-1}\SL(2,2^s)d\le\Aut(\Gamma)\le d^{-1}\SigmaL(2,2^s)d$ and $\Aut(\Gamma) = d^{-1}\la \SL(2,2^s),L\ra d$ for some $L\le \la f\ra$ as required.  We thus assume that every connected orbital digraph of $\Aut(\Gamma)$ that is a subdigraph of $\Gamma$ has automorphism group contained in $d^{-1}\SigmaL(2,2^s)d$ for some scalar matrix $d\in Z$.  Suppose that $\Gamma_1,\ldots,\Gamma_t$, $t\le r$ are the connected orbital digraphs of $\Aut(\Gamma)$ that are subdigraphs of $\Gamma$.  Then by Theorem \ref{MS symmetric}, $\Gamma_i$ has automorphism group $d^{-1}\la \SL(2,2^s),L_i\ra d$ where $L_i\le \la f\ra$.  Then $\Aut(\Gamma)\le d^{-1}\la \SL(2,2^s),L'\ra d$ where $L' = \cap_{i=1}^rL_i$.  Finally, let $L$ be the subgroup of $L'$ consisting of automorphisms of the subdigraph of $\Gamma$ obtained by removing all edges between elements of $\PG(1,2^s)/{\cal D}_\ell$.  Then $\Aut(\Gamma) = d^{-1}\la\SL(2,2^s),L\ra d$ and the result follows.
\end{proof}

We remark that the automorphism group of a Maru\v si\v c-Scapellato digraph $\Gamma$ can be calculated quite quickly:

If $\Gamma$ is nondegenerate and $\Aut(\Gamma)$ imprimitive, then one only needs to determine the subgroup of $d^{-1}\SigmaL(2,2^s)d/{\cal D}_\ell = \la \SL(2,2^s),d^{-1}fd\ra/{\cal D}_\ell$ which is $\Aut(\Gamma)$.  In particular, one only needs to determine the maximal subgroup of $d^{-1}\la f\ra d$ contained in $\Aut(\Gamma)$.  This can easily be accomplished as all such subgroups can be computed quickly. Let $s = 2^t$, $t\ge 1$.  As $F(x) = x^2$, $f$ has order $2^t$, so there are $t+1$ subgroups of $\la f\ra$ each determined by a generator of the form $f^{2^r}$, $0\le r\le t$.  As $Z/{\cal D}_\ell$ has order at most $2^s$, there are at most $(t+1)2^s$ maps which need to be tested as elements of $\Aut(\Gamma)$ in order to determine $\Aut(\Gamma)$.

If $\Gamma$ is degenerate or $\Aut(\Gamma)$ is primitive, then this can be determined easily as the sets $S$ and $T$ are given explicitly.  Again, one only needs to determine $d$, and this can be done as above by checking which $d^{-1}gd$ is contained in $\Aut(\Gamma)$.

\section{Missing digraphs whose automorphism group is primitive}\label{sec:prim-errors}

The first error in the literature is most probably simply an unfortunate typographical error.  The misprint occurs in \cite[Table 3]{LiebeckS1985a} for the groups $\PSL(2,q)$ of degree $q(q^2-1)/24$ with point stabilizer $A_4$.  In the ``Comment" column, the paper literally lists ``$q\equiv +3\ (\mod 8), q\le 19$".  Of course, as written the ``$+$" is entirely superfluous, but in reality it should be a ``$\pm$".  Indeed, without the $\pm$ the group $\PSL(2,13)$ which has $A_4$ as a maximal subgroup is not listed.  The action of $\PSL(2,13)$ on right cosets of $A_4$ is primitive of degree $\vert\PSL(2,13)\vert/\vert A_4\vert = 7\cdot 13$. The authors thank Primo\v z Poto\v cnik for pointing out this error.

\begin{lem}\label{PSL213lem}
Let $\PSL(2,13)$ act transitively on $7\cdot 13$ points with point-stabilizer $A_4$.

Then there are $3$ self-paired orbitals of size $4$ all of which are $2$-arc-transitive.  No other orbital digraphs are $2$-arc-transitive.
Two of the graphs corresponding to these self-paired orbitals are isomorphic with automorphism group $\PSL(2,13)$. The graph corresponding to the union of these orbitals is symmetric and has automorphism group $\PGL(2,13)$.
The  graph corresponding to the remaining orbital has automorphism group $\PGL(2,13)$ and is symmetric.

There is $1$ self-paired orbital of size $6$ whose corresponding graph is symmetric and has automorphism group $\PGL(2,13)$.

There are $2$ non self-paired orbitals of size $12$ whose corresponding digraphs have automorphism group $\PSL(2,13)$, and whose union corresponds to a symmetric graph with automorphism group $\PGL(2,13)$.

There are $4$ self-paired orbitals of size $12$ that are all symmetric, two of which correspond to graphs that are isomorphic with automorphism group $\PSL(2,13)$. Their  union corresponds to a graph that has automorphism group $\PGL(2,13)$ and is symmetric.
The remaining two self-paired orbitals correspond to graphs that are non-isomorphic and have automorphism group $\PGL(2,13)$ and are symmetric.

Any other digraph of order $91$ that contains $\PSL(2,13)$ as a transitive subgroup and is not complete or the complement of a complete graph is a union of the above digraphs and is not symmetric. It will have automorphism group either $\PSL(2,13)$ or $\PGL(2,13)$, depending upon whether or not it can be written as a union of graphs all of whose automorphism groups are $\PGL(2,13)$. If this is possible, then it has automorphism group $\PGL(2,13)$; otherwise, its automorphism group will be $\PSL(2,13)$.
\end{lem}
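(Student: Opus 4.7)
The proof will be a combination of direct \texttt{magma} computation with the coset action of $\PSL(2,13)$ on $91$ points and a group-theoretic bound on overgroups in $S_{91}$. First I would realize $G = \PSL(2,13)$ acting on the cosets of a fixed $A_4$ subgroup and compute the orbits of the point-stabiliser $A_4$ (the suborbits) together with their pairings. The stated decomposition into ten nontrivial suborbits of sizes $4,4,4,6,12,12,12,12,12,12$, with exactly two of the size-$12$ suborbits paired with each other and the rest self-paired, can be read off directly (with a sanity check that the sizes sum to $91$).

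For 2-arc-transitivity, I would use the standard criterion that $G$ acts 2-arc-transitively on an orbital (di)graph $\Gamma$ iff $\Stab_G(x)$ acts 2-transitively on the out-neighbourhood of $x$, equivalently iff $\Stab_G(x) \cap \Stab_G(y)$ acts transitively on the remaining out-neighbours of $y$. On any size-$4$ suborbit, $A_4$ acts as the natural $2$-transitive representation on $4$ points, giving 2-arc-transitivity for all three self-paired orbital graphs of valency $4$. On the size-$6$ suborbit the stabiliser in $A_4$ of a point has order $2$, too small to act transitively on the remaining $5$ neighbours; and on every size-$12$ suborbit $A_4$ acts regularly, so the two-point stabiliser is trivial and 2-arc-transitivity fails in those cases.

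For the automorphism groups, the key input is that $A_4$ is maximal in $\PSL(2,13)$, so the coset action is primitive and hence $\Aut(\Gamma)$ is primitive for every nontrivial generalised orbital digraph $\Gamma$. By inspecting the primitive groups of degree $91$ (via O'Nan--Scott, or directly from the \texttt{magma} library), any primitive overgroup of $\PSL(2,13)$ in $S_{91}$ has socle either $\PSL(2,13)$ or $A_{91}$; in the former case the overgroup lies in $N_{S_{91}}(\PSL(2,13)) = \PGL(2,13)$ since $\Out(\PSL(2,13))$ has order $2$, realised inside $\PGL(2,13)$. Thus $\Aut(\Gamma) \in \{\PSL(2,13), \PGL(2,13), A_{91}, S_{91}\}$, with the latter two occurring only when $\Gamma$ is complete or empty (as $A_{91}$ is $2$-transitive on ordered pairs). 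Consequently $\Aut(\Gamma) = \PGL(2,13)$ iff $\Gamma$ is invariant under the point-stabiliser $S_4$ of $\PGL(2,13)$ acting on the set of $A_4$-suborbits. Since $[S_4:A_4] = 2$, each $\PGL(2,13)$-suborbit is either a single $A_4$-suborbit or the union of two, and a short \texttt{magma} computation confirms the merging pattern claimed: one size-$4$ and two self-paired size-$12$ $A_4$-suborbits are $S_4$-invariant, while the remaining two size-$4$ suborbits fuse into a single $\PGL(2,13)$-suborbit, as do the paired pair of non-self-paired size-$12$ suborbits and two of the self-paired size-$12$ suborbits.

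From this, the remaining statements follow: two orbital digraphs of $G$ are abstractly isomorphic iff they lie in a common $\PGL(2,13)$-orbit on the $A_4$-suborbits; a generalised orbital digraph $\Gamma$ satisfies $\Aut(\Gamma) = \PGL(2,13)$ iff it can be written as a union of $\PGL(2,13)$-orbital digraphs (the individually $S_4$-invariant $A_4$-suborbits together with the fused pairs), and otherwise $\Aut(\Gamma) = \PSL(2,13)$. Such a $\Gamma$ fails to be arc-transitive, because an arc-transitive automorphism group would have to fuse its constituent suborbits into a single arc-orbit, contradicting its decomposition. The main obstacle is purely organisational --- tracking the $S_4$-orbit pattern on the $A_4$-suborbits and matching valencies and self-pairings correctly --- but this is a short and routine \texttt{magma} task.
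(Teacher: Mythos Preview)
Your approach is essentially the one taken in the paper: a \texttt{magma} computation for the orbital data, followed by bounding $\Aut(\Gamma)$ by classifying the primitive groups of degree $91$ that contain $\PSL(2,13)$. Your treatment of $2$-arc-transitivity via the $2$-transitivity of $A_4$ on its suborbits is more conceptual than the paper's, which simply reports the \texttt{magma} output; this is a nice addition.

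There is one point where you are slightly looser than the paper and should tighten up. You assert that any primitive overgroup of $\PSL(2,13)$ in $S_{91}$ has socle $\PSL(2,13)$ or $A_{91}$. The list of socles of simply primitive groups of degree $91$ also includes $\PSL(3,9)$ (in its action on the $91$ points of $\PG(2,9)$), so this possibility must be explicitly excluded. The paper handles this by citing Bloom's classification of subgroups of $\PSL(3,q)$ to show that $\PSL(3,9)$ contains no copy of $\PSL(2,13)$. Your phrase ``directly from the \texttt{magma} library'' could cover this check, but as written the reader cannot tell whether you have verified it or overlooked the case; a sentence ruling out $\PSL(3,9)$ (by \texttt{magma} subgroup search or by the cited reference) is needed.

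A smaller remark: your claim that two orbital digraphs of $\PSL(2,13)$ are isomorphic iff they lie in the same $\PGL(2,13)$-orbit relies on knowing that any isomorphism between them lies in $N_{S_{91}}(\PSL(2,13))=\PGL(2,13)$. This follows because the centraliser of $\PSL(2,13)$ in $S_{91}$ is trivial (as $A_4$ is self-normalising in $\PSL(2,13)$, so the point-stabiliser fixes a unique point) and $\Out(\PSL(2,13))$ has order $2$; it is worth saying this, since otherwise the ``iff'' direction is not justified.
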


\begin{proof}
The information about the orbital digraphs of $\PSL(2,13)$, including whether or not they are self-paired and their automorphism groups and whether they are $2$-arc-transitive or symmetric, was obtained using MAGMA.  So was information about the automorphism groups of unions of exactly two orbital digraphs.  It thus remains to determine the automorphism group of any other digraph of order $91$ that contains $\PSL(2,13)$ and is not complete or its complement.

Let $\Gamma$ be such a digraph.  Then $\Aut(\Gamma)\not = S_{91}$, and $\Aut(\Gamma)$ is $2$-closed.  There is only one other socle of a primitive but not $2$-transitive subgroup of $S_{91}$, namely $\PSL(3,9)$ by \cite[Table B.2]{DixonM1996}.  However, $\PSL(3,9)$ contains no subgroup isomorphic to $\PSL(2,13)$ by \cite{Bloom1967}.  Thus $\soc(\Aut(\Gamma)) = \PSL(2,13)$ and so $\Aut(\Gamma) = \PSL(2,13)$ or $\PGL(2,13)$.  Clearly, if $\Gamma$ can be written as a union of graphs whose automorphism group is $\PGL(2,13)$, then $\Aut(\Gamma) = \PGL(2,13)$.  Otherwise, by the first part of this lemma, $\Gamma$ is a union of digraphs one of which has automorphism group $\PSL(2,13)$ but is not invariant under $\PGL(2,13)$ and its different image under $\PGL(2,13)$ is not a subdigraph of $\Gamma$.  Hence $\Aut(\Gamma) = \PSL(2,13)$.
\end{proof}

This leads to the next error in the literature, which is also mainly typographical.  Namely, in \cite[Table 2]{PraegerX1993} the entries for $\PSL(2,p)$ require $p\ge 11$.  For $p = 5$, $\PSL(2,5)\cong A_5$ is $2$-transitive in its representation of degree $6$, and so any digraph of order $6$ whose automorphism group contains $\PSL(2,5)$ is necessarily complete or has no arcs and has automorphism group $S_6$.  For $\PSL(2,7)\cong\PSL(3,2)$, we see from Theorem \ref{gendigraphauto} that there are other digraphs that are not graphs that are not listed in \cite[Table 2]{PraegerX1993}.  The error here is more one of omission than a mistake in the proof - in \cite{PraegerX1993} the proofs are for vertex-transitive digraphs and graphs of order at least $5p$ (see for example \cite[Table IV]{PraegerX1993}), $p\ge 7$, as the case when $p = 3$ was already considered in \cite{WangX1993} - but \cite{WangX1993} did not consider digraphs that were not graphs.

The next error involves $M_{23}$ in its actions on $11\cdot 23$ points.  There are two actions of $M_{23}$ on $253 = 11\cdot 23$ points. One is on pairs taken from a set of $23$ elements, while the other is on the septads (sets of size $7$) in the Steiner system $S(4,7,23)$ \cite{Conway1985}.  The action on pairs gives $M_{23}$ as a transitive subgroup of $\Aut(T_{23})$, the triangle graph, whose automorphism group is $S_{23}$, and this graph is listed in the row corresponding to $A_{23}$.  The action of $M_{23}$ on septads was not considered in \cite{PraegerX1993}.

\begin{lem}\label{M23lem}
The action of $M_{23}$ on septads (sets of size $7$) in the Steiner system $S(4,7,23)$ of degree $11\cdot 23$ has two orbital digraphs which are graphs of valency $112$ and $140$. Both of these graphs are Cayley graphs of the nonabelian group of order $11\cdot 23$ and so are also isomorphic to metacirculant graphs.  Both graphs have automorphism group $M_{23}$ and neither is $2$-arc-transitive.
\end{lem}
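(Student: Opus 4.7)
The plan is to establish the four claims in sequence, combining elementary arithmetic, standard facts about $M_{23}$, and \texttt{magma} computations in the same style already used in the proof of Lemma~\ref{PSL213lem}.

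First, I would use \texttt{magma} to compute the suborbits of $M_{23}$ acting on the $253$ septads of $S(4,7,23)$. The point-stabilizer has order $|M_{23}|/253 = 40320$ (it is $2^4\!:\!A_7$). The computation should yield exactly two nontrivial suborbits, of lengths $112$ and $140$, both self-paired; this gives exactly two nontrivial orbital digraphs, and both are graphs.

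Second, for the Cayley graph claim I would invoke the well-known fact that $M_{23}$ contains (as a maximal subgroup) the normalizer $F \cong \Z_{23}\rtimes\Z_{11}$ of a Sylow $23$-subgroup, which is the unique nonabelian group of order $253$. Since $|F|=11\cdot 23$ is coprime to $|\mathrm{Stab}_{M_{23}}(x)| = 40320 = 2^{7}\cdot 3^{2}\cdot 5\cdot 7$, we have $F \cap \mathrm{Stab}_{M_{23}}(x) = 1$ for every septad $x$, so $F$ acts semiregularly. As $|F|$ equals the degree, $F$ acts regularly. Both orbital graphs are $F$-invariant (since $F\le M_{23}$), so each is a Cayley graph of $F$. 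As $F$ has a normal Sylow $23$-subgroup, its orbits of length $23$ form blocks, so Proposition~\ref{prop3.3Dragan} yields that each is a $(11,23)$-metacirculant.

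Third, for the full automorphism group: since each graph has valency strictly between $0$ and $252$, neither is a complete graph nor its complement, so $\Aut(\Gamma) \not\in \{A_{253},S_{253}\}$. Any other overgroup of $M_{23}$ in $S_{253}$ would have to be primitive (as $M_{23}$ is primitive on septads) and contain $M_{23}$ as a proper subgroup. A check of the primitive groups of degree $253$ (either via \texttt{magma} or via the known classification) shows that no such group exists, so $\Aut(\Gamma)=M_{23}$. This step, verifying that $M_{23}$ is maximal among primitive subgroups of $S_{253}$ other than $A_{253}$, is the main obstacle; it is the step most reliant on external classification or computation.

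Fourth, non-$2$-arc-transitivity follows from a divisibility argument. If $\Gamma$ were $2$-arc-transitive, then a vertex-stabilizer in $\Aut(\Gamma)=M_{23}$ would act transitively on the $k(k-1)$ ordered pairs of distinct neighbours, so $k(k-1)$ would divide $40320$. For the valency $112$ graph, $112\cdot 111 = 12432$, and $40320/12432$ is not an integer; for the valency $140$ graph, $140\cdot 139 = 19460$, and $40320/19460$ is not an integer. Hence neither graph is $2$-arc-transitive.
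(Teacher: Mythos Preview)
Your proposal is correct and follows essentially the same approach as the paper's proof. The paper cites the Atlas rather than a \texttt{magma} computation for the suborbit lengths, and cites Sabidussi for the Cayley-graph criterion, but the coprimality argument for regularity of the Frobenius group of order $253$ and the divisibility argument ruling out $2$-arc-transitivity are identical.

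The one place where the arguments diverge slightly is your third step. The paper first observes that the two orbital graphs are complements, so $\Aut(\Gamma_1)=\Aut(\Gamma_2)=M_{23}^{(2)}$; it then invokes the Liebeck--Praeger--Saxl theorem on $2$-closures to obtain $M_{23}\trianglelefteq \Aut(\Gamma_1)$, and only afterwards appeals to \cite[Table~B.2]{DixonM1996} to conclude $\Aut(\Gamma_1)=M_{23}$. Your route---noting that any overgroup of $M_{23}$ in $S_{253}$ is primitive and then checking directly (by \texttt{magma} or the same table) that none other than $A_{253},S_{253}$ exist---is more direct and avoids the $2$-closure machinery, at the cost of leaning a little more heavily on the classification/computation. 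Either way works.
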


\begin{proof}
The action on septads gives $M_{23}$ as a transitive subgroup of $\Aut(M_{23})$.  By \cite{Atlas} the suborbits are of length $112$ and $140$, and by \cite{Conway1985} there is a maximal subgroup $H$ of $M_{23}$ of order $253$, and $H$ is isomorphic to the Frobenius group of order $253$.  By order arguments no element of $H$ is contained in the stabilizer of a septad, and so $H$ must be semiregular.  By the Orbit-Stabilizer Theorem we see that $H$ is regular.  As Sabidussi showed \cite{Sabidussi1958} that a graph is isomorphic to a Cayley graph of the group $G$ if and only if it contains a regular subgroup isomorphic to $G$, each of the two orbital graphs of $M_{23}$ are Cayley graphs.  As every Cayley graph of order $qp$ is a metacirculant graph, these two graphs are also metacirculant.

Turning to the automorphism groups of the two orbital digraphs $\Gamma_1$ and $\Gamma_2$ of $M_{23}$, they are complements of each other and so $\Aut(\Gamma_1) = \Aut(\Gamma_2)$.  Also, with respect to the $2$-closure of this action of $M_{23}$ (which can be defined as the intersection of the automorphism groups of its orbital digraphs), we have $M_{23}^{(2)} = \Aut(\Gamma_1)\cap\Aut(\Gamma_2) = \Aut(\Gamma_1)$.  By \cite[Theorem 1]{LiebeckPS1988a} we have $M_{23}\tl\Aut(\Gamma_1)$.  By \cite[Table B.2]{DixonM1996} we have $\Aut(\Gamma_1) = M_{23}$.

Finally, in order to be $2$-arc-transitive, $d(d-1)$ must divide the order of the stabilizer of a point in $M_{23}$ where $d$ is the valency of $\Gamma_1$ or $\Gamma_2$, and this stabilizer has order $2^7\cdot 3^2\cdot 5\cdot 7$.  So neither $\Gamma_1$ nor $\Gamma_2$ is $2$-arc-transitive.
\end{proof}

\begin{table}
\begin{center}
\begin{tabular}{| c | c | c | c | c |}
\hline
$\soc(G)$ & $qp$ & Valency & Cayley & Reference\\
\hline
$A_{qp}$ & $qp$ & $0,qp-1$ & Y &  \\
\hline
$A_p$ & $\frac{p(p-1)}{2}$ & $2(p-2),\frac{(p-2)(p-3)}{2}$ & Y* &  \cite[3.1]{PraegerX1993}\\
\hline
$A_{p+1}$ & $\frac{p(p+1)}{2}$ & $2(p-1),\frac{(p-1)(p-2)}{2}$ & ${\rm N}^\dagger$ & \cite[3.1]{PraegerX1993} \\
\hline
$A_7$ & $5\cdot 7$ & ${\bf 4},12,18$ & N & \cite[3.2]{PraegerX1993} \\
\hline
$\PSL(4,2)$ & $5\cdot 7$ & $16,18$ & N & \cite[3.3]{PraegerX1993} \\
\hline
$\PSL(5,2)$ & $5\cdot 31$ & $42,112$ & Y & \cite[3.3]{PraegerX1993} \\
\hline
$\Omega^\pm(2d,2)$ & $(2^d\mp 1)(2^d\pm 1)$ & $2^{2d - 2},2(2^{d-1}\mp 1)(2^{d-2}\pm 1)$ & N & \cite[3.4]{PraegerX1993}\\
\hline
$\PSp(4,k)$ & $(k^2 + 1)(k + 1)$ & $k^2 + k$, $k^3$, $k$ even & ${\rm N}^\dagger$ & \cite[3.5]{PraegerX1993}\\
\hline
$\PSL(2,k^2)$ & $k(k^2 + 1)/2$ & $k^2 - 1,\frac{k^2 - k}{2}, k^2 \pm k$, $k \equiv 1\ (\mod 4)$ & N & \cite[4.1]{PraegerX1993}\\
\hline
$\PSL(2,k^2)$ & $k(k^2 + 1)/2$ & $k^2 - 1,\frac{k^2 + k}{2}, k^2 \pm k$, $k\equiv 3\ (\mod 4)$ & N & \cite[4.1]{PraegerX1993}\\
\hline
$\PSL(2,p)$  & $\frac{p(p \mp 1)}{2}$ & $\frac{p\pm 1}{2}, p\pm 1$, or & Y**  & \cite[4.4]{PraegerX1993}    \\
             &                        & $\frac{p\pm 1}{4}$ or $2(p-1)$ & & \\
\hline
$G = \PGL(2,7)$ & $3\cdot 7$ & $4,8$ & Y & \cite[Example 2.3]{WangX1993} \\
\hline
$G = \PGL(2,11)$ & $5\cdot 11$ & ${\bf 4},6,8,12,24$ & Y & \cite[4.3]{PraegerWX1993}  \\
\hline
$\PSL(2,13)$ & $7\cdot 13$ & ${\bf 4},6,12,24$ & N &  Lemma \ref{PSL213lem} \\
\hline
$\PSL(2,19)$ & $3\cdot 19$ & ${\bf 6},20,30$ & Y & \cite[4.2]{PraegerWX1993} \\
\hline
$\PSL(2,23)$ & $11\cdot 23$ & ${\bf 4},6,8,12,24$  & Y & \cite[4.3]{PraegerWX1993}  \\
\hline
$\PSL(2,29)$ &   $7\cdot 29$   & $12,20,30,60$ & ${\rm N}^\#$ & \cite[4.2]{PraegerWX1993} \\
\hline
$\PSL(2,59)$ & $29\cdot 59$ & ${\bf 6},10,12,20,30,60$ & Y & \cite[4.2]{PraegerWX1993} \\
\hline
$\PSL(2,61)$ & $31\cdot 61$ & ${\bf 6},10,12,20,30,60$ & N & \cite[4.2]{PraegerWX1993} \\
\hline
$M_{22}$    & $7\cdot 11$  & ${\bf 16},60$ & N & \cite[3.6]{PraegerWX1993}             \\
\hline
$M_{23}$    & $11\cdot 23$       &  $112,140$ & Y & Lemma \ref{M23lem} \\
\hline
\end{tabular}
\caption{The graphs of order $pq$ with primitive automorphism groups.}
\label{table}
\end{center}
\end{table}

\begin{thrm}
Let $\Gamma$ be a vertex-transitive graph of order $qp$, where $q$ and $p$ are distinct primes, whose automorphism group $G$ is simply primitive.  Then $\soc(G)$ is given in Table \ref{table}.  There is a boldface entry in the column ``Valency" if and only if there is a $2$-arc-transitive graph of that valency.  The superscipt symbols in the table have the following meanings:
\begin{itemize}
\item $*$ means $p\ge 7$,
\item $\dagger$ means that these graphs are also Maru\v si\v c-Scapellato graphs but in the case of $A_{p+1}$ this is only true for $A_6$,
\item $**$ means these graphs are Cayley if and only if $p\equiv 3\ (\mod 4)$,
\item $\#$ means that these graphs are metacirculant graphs which are not Cayley graphs.
\end{itemize}
\end{thrm}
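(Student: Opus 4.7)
The plan is to reduce everything to the known classification of simply primitive permutation groups of degree $qp$. By \cite[Table 3]{LiebeckS1985a}, with the typographical correction ``$q \equiv \pm 3 \pmod 8$, $q \le 19$'' discussed above, the possible socles of a simply primitive group of degree $qp$ are exactly those listed in the first column of Table~\ref{table}. So the first step is to verify that nothing else can occur; this uses the Liebeck--Saxl list together with the fact that the nonabelian simple groups admitting a primitive action of degree $qp$ are classified, and that within each such action the possibilities for the overgroup $G$ up to $\Aut(\soc(G))$ are bounded.

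Next, for each socle $T$ appearing in Table~\ref{table}, the plan is to run through the suborbits of $T$ (or of the relevant overgroup) and determine which orbital digraphs are self-paired (i.e.\ give graphs) and of what valency. For most rows this is already done in \cite{PraegerX1993,PraegerWX1993,WangX1993}, and I would simply cite the corresponding lemma listed in the ``Reference'' column. The two rows that are genuinely new in the table, namely $\PSL(2,13)$ on $7\cdot 13$ points and $M_{23}$ in its septad action on $11\cdot 23$ points, are handled by Lemmas~\ref{PSL213lem} and~\ref{M23lem}, which were proved above.

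Once each orbital graph is identified, its full automorphism group is determined via the $2$-closure: $\Aut(\Gamma)$ is $2$-closed and primitive on $qp$ vertices, so $\soc(\Aut(\Gamma))$ must itself appear in \cite[Table 3]{LiebeckS1985a}. Comparing orders and using that $T \le \Aut(\Gamma)$ pins down $\Aut(\Gamma)$ up to at most a small number of possibilities within $\Aut(T)$, which are then resolved case by case (often because $\Aut(T)/T$ is small, e.g.\ $\PGL(2,p)/\PSL(2,p)$ has order $2$). This is also how $2$-arc-transitivity is tested: in the symmetric case one checks whether the point stabilizer acts transitively on ordered pairs of adjacent neighbours, and in practice one only needs the divisibility $d(d-1) \mid |G_v|$ plus a direct inspection of the suborbit structure (as in Lemma~\ref{M23lem}). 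The boldface valencies are simply those for which this test succeeds.

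The Cayley status of each graph $\Gamma$ of order $qp$ is decided by Sabidussi's theorem: $\Gamma$ is a Cayley graph on a group $H$ of order $qp$ iff $\Aut(\Gamma)$ contains a regular subgroup isomorphic to $H$. Since the only groups of order $qp$ are $\Z_{qp}$ and, when $q \mid p-1$, the nonabelian group of order $qp$, we just need to check which (if any) of these embed regularly in the now-known $\Aut(\Gamma)$; the conditions marked $*$, $\dagger$, $**$, and $\#$ record precisely the outcomes of this check, and two are already worked out in the cited lemmas. I expect the main obstacle to be neither the group theory nor the suborbit bookkeeping but rather the completeness check in step one: making sure that every socle in \cite[Table 3]{LiebeckS1985a} is accounted for, including the easily overlooked small cases such as $\PSL(2,5)$ of degree $6$ (which is $2$-transitive, hence contributes nothing beyond $K_6$ and $\overline{K_6}$), $\PSL(2,7)$ of degree $21$ (already absorbed into the $\PGL(2,7)$ row, and otherwise covered by Theorem~\ref{gendigraphauto}), and the septad action of $M_{23}$, together with the restored $\PSL(2,13)$ row. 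Once these are in place the theorem reduces to a transcription of the cited results.
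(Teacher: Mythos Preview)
Your approach is essentially the same as the paper's: the theorem is a compilation, and the proof amounts to pointing to the references in the table together with Lemmas~\ref{PSL213lem} and~\ref{M23lem} for the two new rows. The paper's own proof is only a few lines and does exactly this.

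Two small points where your sketch diverges from what the paper actually does. First, for the $2$-arc-transitive column the paper does not rederive anything from suborbit structure or divisibility; apart from the two new lemmas it simply cites \cite{MarusicP2002}. Second, you describe the superscripts $*$, $\dagger$, $**$, $\#$ as all recording the outcome of the Sabidussi regular-subgroup check, but that is not quite right: $\dagger$ marks graphs that are also Maru\v si\v c--Scapellato graphs (a structural identification, not a Cayley question), and $\#$ marks graphs that are metacirculant but not Cayley. The paper handles these by citing \cite{MarusicS1994} (pages 192--193) for both the $\PSL(2,29)$ metacirculant claim and the determination of which primitive graphs are Maru\v si\v c--Scapellato, rather than by a direct regular-subgroup search. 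None of this affects correctness, but if you follow your plan literally you would be doing more work than needed for the $2$-arc-transitive entries and would need a separate argument (not Sabidussi) for the $\dagger$ annotation.
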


\begin{proof}
Most of the information in the Table \ref{table} is taken directly from the sources in the column ``Reference", with the following exceptions.  First, information about $2$-arc-transitive graphs not given in Lemma \ref{PSL213lem} or \ref{M23lem} can be found in \cite{MarusicP2002}.  That the generalized orbital digraphs of $\PSL(2,29)$ are metacirculants is proven in \cite[pg. 192, paragraph 3]{MarusicS1994}.  The vertex-transitive graphs of order $pq$ with primitive automorphism group that are also isomorphic to nontrivial Maru\v si\v c-Scapellato graphs are determine in \cite{MarusicS1994} starting at the bottom of page 192.
\end{proof}

\section{Other errors in the literature}\label{sec:other-errors}

To end this paper, we list the errors that we are aware in the literature that follow from the errors above and that are not in the original papers where the error was made.

\begin{itemize}
\item The statement of \cite[Theorem 2.5]{DobsonHKM2017} is missing the graphs given in Theorem \ref{gendigraphauto} with imprimitive automorphism group $\PSL(2,11)$.  This result is only used to discuss graphs of order $21$, and so this error does not affect any results proven in the paper.
\item The result \cite[Proposition 2.5]{WangFZWM2016} does not list the symmetric graphs of valency $4$ given by Lemma \ref{PSL213lem}.  Consequently, \cite[Lemma 3.4]{WangFZWM2016} has a small gap which can be filled using GAP or MAGMA.
\item The result \cite[Proposition 4.2]{MarusicP2002} is missing the $2$-arc-transitive graphs of valency $4$ given by Lemma \ref{PSL213lem}.
\item  The result \cite[Corollary 3.3, Table 1]{Dobson2006a} is missing the graphs given by Lemmas \ref{PSL213lem} and \ref{M23lem}.  Additionally, \cite[Theorem 3.2]{Dobson2006a} and \cite[Corollary 3.3]{Dobson2006a} are missing the group $\PSL(2,11)$ in its imprimitive action action on 55 points. Finally, \cite[Theorem 4.1(3)]{Dobson2006a} is missing these same graphs.

The result from \cite[Theorem 3.2(1)]{Dobson2006a} could be strengthened to digraphs by including the digraphs with simple and imprimitive automorphism groups.
\item The result \cite[Theorem]{LiWWX1994} does not consider the action of $\PSL(2,13)$ given in Lemma \ref{PSL213lem} nor the action of $M_{23}$ given in Lemma \ref{M23lem}.
\end{itemize}


\providecommand{\bysame}{\leavevmode\hbox to3em{\hrulefill}\thinspace}
\providecommand{\MR}{\relax\ifhmode\unskip\space\fi MR }
\providecommand{\MRhref}[2]{%
  \href{http://www.ams.org/mathscinet-getitem?mr=#1}{#2}
}
\providecommand{\href}[2]{#2}

\end{document}